\newtheorem{thm}{Theorem}[section]
\newtheorem{prop}[thm]{Proposition}
\newtheorem{cor}[thm]{Corollary}
\newtheorem{lem}[thm]{Lemma}
\begin{document}

\title{Weight Changing Operators for Automorphic Forms on Grassmannians and
Differential Properties of Certain Theta Lifts}

\author{Shaul Zemel\thanks{This work was carried out while I was working at the Technical University of Darmstadt, Germany. The initial part of this work was supported by the Minerva Fellowship (Max-Planck-Gesellschaft).}}

\maketitle


\section*{Introduction}

The classical Shimura--Maa\ss\ operators $\frac{\partial}{\partial_{\tau}}+\frac{k}{2iy}$ and $y^{2}\frac{\partial}{\partial_{\overline{\tau}}}$ are well-known for taking (elliptic, real-analytic) modular forms of weight $k$ to modular forms of weight $k+2$ and $k-2$ respectively. In addition, \cite{[Ma1]} and \cite{[Ma2]} consider differential operators which have a similar effect on Siegel modular forms, a work which was generalized in \cite{[Sh2]}. The following paper \cite{[Sh3]} concerns differential operators on functions on unitary groups which have related properties. All these operators have number-theoretic as well as representation-theoretic (or Lie-algebraic) interpretations, and are therefore the subject of many research papers (see, e.g., the reference \cite{[Sh1]}, which is strongly related to the case considered in this paper, as well as \cite{[Sh4]} for some generalizations of the results of the previously mentioned references or the investigation of invariant differential operators appearing in \cite{[Sh5]}, for example).

Our first goal is to define similar operators for modular (or automorphic) forms on another type of Shimura varieties, namely quotients of Grassmannians of vector spaces of signature $(2,b_{-})$. These are obtained by interpolating the square of the Shimura--Maa\ss\ operators from the case $b_{-}=1$, the multiple Shimura--Maa\ss\ operators obtained in the case $b_{-}=2$, and the operators for Siegel modular forms appearing in the case $b_{-}=3$. One may use Lie-theoretic considerations in order to establish the existence of such operators, but obtaining their explicit formula in this way is very tedious, because of the change of coordinates between the tube domain model and the transitive free action of an appropriate parabolic subgroup of $SO^{+}(V)$. We also remark that \cite{[Sh1]} also considers differential operators on automorphic forms on orthogonal groups. However, the operators defined in that reference take automorphic forms of some weight (i.e., a representation of the maximal compact subgroup) $\rho$ to automorphic forms having weight $\rho\otimes\eta$ for some $b_{-}$-dimensional representation $\eta$, hence in particular take scalar-valued automorphic forms on Grassmannians to vector-valued functions. Moreover, since that reference works with the coordinates arising from the bounded model while we consider the tube domain model (since the explicit formulae for the theta functions are more neatly presented in this model), an appropriate change of coordinates must be employed. It is true that after this change of coordinates, using the natural bilinear form on the tangent space of the Grassmannian in the tube domain model we may indeed obtain differential operators which remain in the scalar-valued realm. Indeed, after some additional normalization we obtain the operators defined in this paper using this method. However, the calculations involved are very delicate, laborious, long, an unenlightening, for which reason we have chosen to state and prove the formulae for the operators directly.

The second goal of this paper is to present two applications of these weight
changing operators, in the theory of theta lifts. We recall the generalization,
defined in \cite{[B]}, for the Doi--Naganuma lifting first introduced in
\cite{[DN]} and \cite{[Ng]}. This map is given in \cite{[B]} in terms of a
singular theta lift, and takes weakly holomorphic elliptic modular forms to
meromorphic modular forms on Grassmannians. On the other hand, \cite{[Ze2]}
defines a similar theta lift, using the same theta functions with polynomials.
The first result of this paper states that in the case of an even dimension,
a power of our weight raising operator sends the theta lift from \cite{[Ze2]}
to the generalized Doi--Naganuma lifting of \cite{[B]}.

In addition, recall that the theta lift from Section 13 of \cite{[B]} (which is
also studied extensively in \cite{[Bru]} and others) is a \emph{real} function.
No automorphic forms of non-zero weight can be real. As a second application
for our operators we define a notion of $m$-real automorphic forms of positive
weight $m$, and show that in case one applies the theta lift from Section 14 of
\cite{[B]} (or from \cite{[Ze2]}) to a modular form with real Fourier
coefficients, then resulting theta lift is $m$-real.

\smallskip

The first half of the paper contains numerous statements whose proofs are
delayed to later sections. We choose this way of presentation since most of the
proofs consist of direct calculations, which may divert the reader's attention
from the main ideas. Specifically, the paper is divided into 4 sections. In
Section \ref{Operators} we define the weight raising and weight lowering
operators and state their properties. Section \ref{Lifts} presents the images of
certain functions under the weight raising operators, and proves the main
theorem. Section \ref{Proofswcop} presents the proofs for the assertions of
Section \ref{Operators}, while Section \ref{Proofsact} contains the missing
proofs of Section \ref{Lifts}.

\smallskip

I would like to thank J. Bruinier for numerous suggestions and intriguing
discussions regarding the results of this paper.

\section{Weight Changing Operators for Automorphic Forms on Orthogonal Groups
\label{Operators}}

In this Section we present automorphic forms on complex manifolds arising as
orthogonal Shimura varieties of signature $(2,b_{-})$, introduce the weight
raising and weight lowering operators on such forms, and give some of their
properties. The proofs of most assertions are postponed to Section
\ref{Proofswcop}.

\medskip

Let $V$ be a real vector space with a non-degenerate bilinear form of signature
$(b_{+},b_{-})$. The pairing of $x$ and $y$ in $V$ is written $(x,y)$, and
$x^{2}$ stands for the norm $(x,x)$ of $x$. For $S \subseteq V$, $S^{\perp}$
denotes the subspace of $V$ which is perpendicular to $S$. The
\emph{Grassmannian} $G(V)$ of $V$ is defined to be the set of all decompositions
of $V$ into the orthogonal direct sum of a positive definite space $v_{+}$ and a
negative definite space $v_{-}$. In the case $b_{+}=2$ (which is the only case
we consider in this paper), it is shown in Section 13 of \cite{[B]}, Sections
3.2 and 3.3 of \cite{[Bru]}, or Subsection 2.2 of \cite{[Ze2]} (among others),
that $G(V)$ carries a complex structure and has several equivalent models, which
we now briefly present. Let \[P=\big\{Z_{V}=X_{V}+iY_{V} \in
V_{\mathbb{C}}=V\otimes_{\mathbb{R}}\mathbb{C}\big|Z_{V}^{2}=0,\
(Z_{V},\overline{Z_{V}})>0\big\}.\] $Z_{V} \in V_{\mathbb{C}}$ lies in $P$ if
and only if $X_{V}$ and $Y_{V}$ are orthogonal and have the same positive norm.
$P$ has two connected components (which are interchanged by complex
conjugation), and let $P^{+}$ be one component. The map
\[P^{+} \to G(V),\quad Z_{V}\mapsto\mathbb{R}X_{V}\oplus\mathbb{R}Y_{V}\] is
surjective, and $C^{*}$ acts freely and transitively on each  fiber of this map
by multiplication. This realizes $G(V)$ as the image of $P^{+}$ in the
projective space $\mathbb{P}(L_{\mathbb{C}})$, which is an analytically open
subset of the (algebraic) quadric $Z_{V}^{2}=0$, yielding a complex structure on
$G(V)$. This is the \emph{projective model} of $G(V)$.

Let $z$ be a non-zero vector in $V$ which is \emph{isotropic}, i.e., $z^{2}=0$.
The vector space $K_{\mathbb{R}}=z^{\perp}/\mathbb{R}z$ is non-degenerate and
Lorentzian of signature $(1,b_{-}-1)$. Choosing some $\zeta \in V$ with
$(z,\zeta)=1$ and restricting the projection $z^{\perp} \to K_{\mathbb{R}}$
to $\{z,\zeta\}^{\perp}$ gives an isomorphism. We thus write $V$ as
$K_{\mathbb{R}}\times\mathbb{R}\times\mathbb{R}$, in which
\[(\alpha,a,b)=a\zeta+bz+\big(\alpha\in\{z,\zeta\}^{\perp} \cong
K_{\mathbb{R}}\big),\quad(\alpha,a,b)^{2}=\alpha^{2}+2ab+a^{2}\zeta^{2}.\] A
(holomorphic) section $s:G(V) \to P^{+}$ is defined by the pairing with $z$
being 1. Subtracting $\zeta$ from any $s$-image and taking the
$K_{\mathbb{C}}$-image of the result yields a biholomorphism between $G(V) \cong
s\big(G(V)\big)$ and the \emph{tube domain} $K_{\mathbb{R}}+iC$, where $C$ is a
cone of positive norm vectors in the Lorentzian space $K_{\mathbb{R}}$. $C$ is
called the \emph{positive cone}, and it is determined by the choice of $z$ and
the connected component $P^{+}$. The inverse biholomorphism takes $Z=X+iY \in
K_{\mathbb{C}}$ to
\[Z_{V,Z}=\bigg(Z,1,\frac{-Z^{2}-\zeta^{2}}{2}\bigg)=\bigg(X,1,\frac{Y^{2}-X^{2}
-\zeta^{2}}{2}\bigg)+i\big(Y,0,-(X,Y)\big),\] with the real and imaginary parts
denoted $X_{V,Z}$ and $Y_{V,Z}$ respectively. They are orthogonal and have norm
$Y^{2}>0$. This identifies $G(V)$ with the \emph{tube domain model}
$K_{\mathbb{R}}+iC$. Taking the other connected component of $P$ corresponds to
taking the other cone $-C$ to be the positive cone, and to the conjugate complex
structure.

\smallskip

The subgroup $O^{+}(V)$ consisting of elements of $O(V)$ preserving the
orientation on the positive definite part acts on $P^{+}$ and $G(V)$, respecting
the projection. Elements of $O(V) \setminus O^{+}(V)$ interchange the connected
components of $P$. The action of $O^{+}(V)$ (and also of the connected component
$SO^{+}(V)$) on $G(V)$ is transitive, with the stabilizer $K$ (or $SK \leq
SO^{+}(V)$) of a point being isomorphic to $SO(2) \times O(n)$ (resp. $SO(2)
\times SO(n)$). Therefore $G(V)$ is isomrphic to $O^{+}(V)/K$ and to
$SO^{+}(V)/SK$. Given an isotropic $z$ as above, the action of $O^{+}(V)$
transfers to $K_{\mathbb{R}}+iC$, and for $M \in O^{+}(V)$ and $Z \in
K_{\mathbb{R}}+iC$ we have
\[MZ_{V,Z}=J(M,Z)Z_{V,MZ},\quad\mathrm{with}\quad
J(M,Z)=(MZ_{V,Z},z)\in\mathbb{C}^{*}.\] $J$ is a \emph{factor of automorphy},
namely the equality \[J(MN,Z)=J(M,NZ)J(N,Z)\] holds for all $Z \in
K_{\mathbb{R}}+iC$ and $M$ and $N$ in $O^{+}(V)$. For such $M$ we define the
\emph{slash operator} of weight $m$, and more generally of weight $(m,n)$, by
\[\Phi[M]_{m,n}(Z)=J(M,Z)^{-m}\overline{J(M,Z)}^{-n}\Phi(MZ),\qquad[M]_{m}=[M]_{
m,0}.\] The fact that $(Z_{V},\overline{Z_{V}})=2Y^{2}$ and the definition of
$J(M,Z)$ yield the equalities
\begin{equation}
\big(\Im(MZ)\big)^{2}=\frac{Y^{2}}{|J(M,Z)|^{2}}\quad\mathrm{and}\quad\big(F(Y^{
2})^{t}\big)[M]_{m,n}=F[M]_{m+t,n+t}(Y^{2})^{t} \label{Y2mod}
\end{equation}
the latter holding for every $m$, $n$, $t$, and function $F$ on
$K_{\mathbb{R}}+iC$ (see Lemma 3.20 of \cite{[Bru]} for the first equality in
Equation \eqref{Y2mod}, and the second one follows immediately).

The invariant measure on $K_{\mathbb{R}}+iC$ is $\frac{dXdY}{(Y^{2})^{b_{-}}}$
(see Section 4.1 of \cite{[Bru]}, but one can also prove this directly, using
the generators of $O^{+}(V)$ considered in Section \ref{Proofswcop} below). Note
that this measure depends on the choice of a basis for $K_{\mathbb{R}}+iC$, but
changing the basis only multiplies this measure by a positive global scalar. Let
$\Gamma$ be a discrete subgroup $\Gamma$ of $O^{+}(V)$ of cofinite volume. In
most of the interesting cases $\Gamma$ will be either the $O^{+}$ or the
$SO^{+}$ part of the orthogonal group of an even lattice $L$ in $V$, or the
discriminant kernel of such a group. Given $m\in\mathbb{Z}$, an
\emph{automorphic form of weight $m$ with respect to $\Gamma$} is defined to be
a (complex valued) function $\Phi$ on $K_{\mathbb{R}}+iC$ for which the equation
\[\Phi(MZ)=J(M,Z)^{m}\Phi(Z),\qquad\mathrm{or\
equivalently}\qquad\Phi[M]_{m}(Z)=\Phi(Z),\] holds for all $M\in\Gamma$ and $Z
\in K_{\mathbb{R}}+iC$. Using the standard argument, such a function is
equivalent to a function on $P^{+}$ which is $-m$-homogenous (with respect to
the action of $\mathbb{C}^{*}$) and $\Gamma$-invariant, as considered, for
example, in \cite{[B]}.

\smallskip

We now consider some differential operators on functions on $K_{\mathbb{R}}+iC$.
Given a basis for $K_{\mathbb{R}}$, we write $\partial_{x_{k}}$ for
$\frac{\partial}{\partial x_{k}}$ (for $1 \leq k \leq b_{-}$). Similarly,
$\partial_{y_{k}}$ stands for the coordinates of the imaginary part from $C$.
The notation for the derivatives
$\partial_{z_{k}}=\frac{1}{2}(\partial_{x_{k}}-i\partial_{y_{k}})$ and
$\partial_{\overline{z_{k}}}=\frac{1}{2}(\partial_{x_{k}}+i\partial_{y_{k}})$
will be further shortened to $\partial_{k}$ and $\partial_{\overline{k}}$
respectively.

The operator $I=\sum_{k}x_{k}\partial_{x_{k}}$ multiplies a homogenous function
on $K_{\mathbb{R}}$ by its homogeneity degree, and is thus independent of the
choice of basis (indeed, it has an intrinsic Lie-theoretic description). The
operators
\[D^{*}=\sum_{k}y_{k}\partial_{k}\quad\mathrm{and}\quad\overline{D^{*}}=\sum_{k}
y_{k}\partial_{\overline{k}}\] from \cite{[Na]} are intrinsic as well, and they
are also invariant under translations in the real part of $K_{\mathbb{R}}+iC$.
If the basis for $K_{\mathbb{R}}$ is \emph{orthonormal}, i.e., orthogonal with
the first vector having norm 1 and the rest having norm $-1$, then the
\emph{Laplacian of $K_{\mathbb{R}}$}, denoted $\Delta_{K_{\mathbb{R}}}$, is
defined to be $\partial_{x_{1}}^{2}-\sum_{k=2}^{b_{-}}\partial_{x_{k}}^{2}$. It
is independent of the choice of the orthonormal basis (though using a basis
which is not orthonormal it takes different forms), and it is invariant under
the action of $O(K_{\mathbb{R}})$ as well as under translations in
$K_{\mathbb{R}}$. With complex coordinates it has three counterparts,
\[\Delta_{K_{\mathbb{C}}}^{h}=\partial_{1}^{2}-\sum_{k=2}^{b_{-}}\partial_{k}^{2
},\quad\Delta_{K_{\mathbb{C}}}^{\overline{h}}=\partial_{\overline{1}}^{2}-\sum_{
k=2}^{b_{-}}\partial_{\overline{k}}^{2},\quad\mathrm{and}\quad\Delta_{K_{\mathbb
{C}}}^{\mathbb{R}}=\partial_{1}\partial_{\overline{1}}-\sum_{k=2}^{b_{-}}
\partial_{k}\partial_{\overline{k}},\] which we call the \emph{holomorphic
Laplacian of $K_{\mathbb{C}}$} (of Hodge weight $(2,0)$), the
\emph{anti-holomorphic Laplacian of $K_{\mathbb{C}}$} (of Hodge weight $(0,2)$),
and the \emph{real Laplacian of $K_{\mathbb{C}}$} (of Hodge weight $(1,1)$),
respectively. These operators have the same invariance and independence
properties as $\Delta_{K_{\mathbb{R}}}$. Note that the appropriate combinations
appearing in \cite{[Bru]} and \cite{[Na]} can be identified as our operators
$\frac{1}{2}\Delta_{K_{\mathbb{C}}}^{h}$,
$\frac{1}{2}\Delta_{K_{\mathbb{C}}}^{\overline{h}}$, and
$\Delta_{K_{\mathbb{C}}}^{\mathbb{R}}$ respectively, expressed in a basis which
is not orthonormal. We shall indeed discuss and generalize the operators
$\Delta_{1}$ and $\Delta_{2}$ of \cite{[Na]} in Proposition \ref{RLcomp} below.

\smallskip

The weight changing operators and their defining property are given in
\begin{thm}
For any integer $m$ define $R_{m}^{(b_{-})}$ to be the operator
\[(Y^{2})^{\frac{b_{-}}{2}-m-1}\Delta_{K_{\mathbb{C}}}^{h}(Y^{2})^{m+1-\frac{b_{
-}}{2}}=\Delta_{K_{\mathbb{C}}}^{h}-\frac{i(2m+2-b_{-})}{Y^{2}}D^{*}-\frac{
m(2m+2-b_{-})}{2Y^{2}}.\] In
addition, define
\[L^{(b_{-})}=(Y^{2})^{2}\overline{R_{0}}=(Y^{2})^{\frac{b_{-}}{2}+1}\Delta_{K_{
\mathbb{C}}}^{\overline{h}}(Y^{2})^{1-\frac{b_{-}}{2}}=(Y^{2})^{2}\Delta_{K_{
\mathbb{C}}}^{\overline{h}}+iY^{2}(2-b_{-})\overline{D^{*}}.\] Then the
equalities
\[(R_{m}^{(b_{-})}F)[M]_{m+2}=R_{m}^{(b_{-})}\big(F[M]_{m}),\qquad(L^{(b_{-})}
F)[M]_{m-2}=L^{(b_{-})}\big(F[M]_{m})\] hold for every $\mathcal{C}^{2}$
function $F$ on $K_{\mathbb{R}}+iC$ and any $M \in O^{+}(V)$. \label{wcop}
\end{thm}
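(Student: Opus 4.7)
The plan is to reduce the slash-commutation for $R_m^{(b_-)}$ to a purely holomorphic intertwining identity for $\Delta^h_{K_\mathbb{C}}$ alone, and then to verify that identity on a set of generators of $O^+(V)$; the corresponding statement for $L^{(b_-)}$ follows by the identical argument with $\Delta^{\overline{h}}_{K_\mathbb{C}}$ (or by complex conjugation).

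Setting $t=m+1-\frac{b_-}{2}$ and $G=(Y^2)^tF$, two applications of the second equality in \eqref{Y2mod} (moving the factors $(Y^2)^{\pm t}$ through the slash operators with the appropriate weight shifts) reduce the desired equality $(R_m^{(b_-)}F)[M]_{m+2}=R_m^{(b_-)}(F[M]_m)$ to
\[\Delta^h_{K_\mathbb{C}}\bigl(G[M]_{\frac{b_-}{2}-1,\frac{b_-}{2}-m-1}\bigr)=(\Delta^h_{K_\mathbb{C}}G)[M]_{\frac{b_-}{2}+1,\frac{b_-}{2}-m-1}.\]
Since the antiholomorphic factor $\overline{J(M,Z)}$ commutes past the holomorphic operator $\Delta^h_{K_\mathbb{C}}$, the common power $\overline{J(M,Z)}^{\,m+1-b_-/2}$ cancels from both sides, leaving the purely holomorphic identity
\[\Delta^h_{K_\mathbb{C}}\bigl[J(M,Z)^{1-b_-/2}H(MZ)\bigr]=J(M,Z)^{-1-b_-/2}(\Delta^h_{K_\mathbb{C}}H)(MZ)\]
to be established for every $\mathcal{C}^2$ function $H$ on $K_\mathbb{R}+iC$ and every $M\in O^+(V)$. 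The cocycle relation $J(MN,Z)=J(M,NZ)J(N,Z)$ implies that this identity is preserved under products of elements of $O^+(V)$, so it suffices to verify it on a chosen set of generators.

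For the generators with trivial $J$---translations $Z\mapsto Z+v$ with $v\in K_\mathbb{R}$, and the action of $O(K_\mathbb{R})$ on $Z$---the identity reduces respectively to the translation invariance of $\Delta^h_{K_\mathbb{C}}$ and to its invariance under orthogonal changes of the orthonormal basis of $K_\mathbb{R}$. For the Levi scalings in the parabolic stabilizing $\mathbb{R}z$, where $J$ is a constant in $Z$ and $M$ acts by $Z\mapsto\lambda Z$, the identity is a routine homogeneity computation.

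The subtle case, and the main obstacle, is the Weyl-type generator $M_0$ that does not preserve $z$. Using $J(M_0,Z)=(M_0Z_{V,Z},z)$ and the quadratic dependence of $Z_{V,Z}$ on $Z$, one finds that $J(M_0,Z)$ is an explicit quadratic polynomial in $Z$ (of the form $-Z^2/2$ up to conventions) and that $M_0Z$ is a rational function of $Z$ with $J(M_0,Z)$ in its denominator. Substituting these into the identity and applying the chain rule reduces it to a direct calculation whose essence is already visible at $H\equiv 1$: a Leibniz expansion together with the algebraic identities $\Delta^h_{K_\mathbb{C}}(Z^2)=2b_-$ and $(\partial_1Z^2)^2-\sum_{k\geq 2}(\partial_kZ^2)^2=4Z^2$ gives
\[\Delta^h_{K_\mathbb{C}}\bigl[(Z^2)^\sigma\bigr]=2\sigma(2\sigma-2+b_-)(Z^2)^{\sigma-1},\]
which vanishes precisely at $\sigma=1-b_-/2$; this is what pins down the Maass-type prefactor in the definition of $R_m^{(b_-)}$. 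For general $H$, the second-derivative piece of the chain rule produces the correct right-hand side (using that $\sum_k\epsilon_k\partial_k(M_0Z)_j\partial_k(M_0Z)_l$ equals a scalar multiple of the Lorentzian metric $\epsilon_j\delta_{jl}$), while the first-order cross terms between $\partial_k[J^{1-b_-/2}]$ and $\partial_k[H\circ M_0]$ cancel by the same mechanism---both cancellations conspiring at the same exponent $\sigma=1-b_-/2$ is the content of the theorem.
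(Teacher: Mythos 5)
Your proof is correct, and although it shares the paper's outer skeleton (conjugation by a power of $Y^{2}$, reduction of the $L^{(b_{-})}$ case to the $R$ case by complex conjugation, and verification on the generators $p_{\xi}$, $k_{a,A}$, $w$ via the cocycle relation), the central reduction is genuinely different. The paper conjugates only by $(Y^{2})^{m}$, reducing to the weight $(0,-m)$ and then weight $0$ statement for the \emph{mixed} operator $R_{0}^{(b_{-})}=\Delta_{K_{\mathbb{C}}}^{h}+\frac{i(b_{-}-2)}{Y^{2}}D^{*}$, whose covariance under $w$ is deduced from the explicit conjugation formulas $(\Delta_{K_{\mathbb{C}}}^{h})^{w}$ and $(D^{*})^{w}$ of \cite{[Na]}, the two correction terms in $D=\sum_{k}z_{k}\partial_{k}$ cancelling between the second-order and first-order pieces. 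You instead push the full factor $(Y^{2})^{m+1-b_{-}/2}$ through the slash operators and strip the antiholomorphic power of $\overline{J(M,Z)}$, landing on a purely holomorphic Bol-type intertwining identity for $\Delta_{K_{\mathbb{C}}}^{h}$ alone between holomorphic weights $\frac{b_{-}}{2}-1$ and $\frac{b_{-}}{2}+1$ (for $b_{-}=1$ this is literally Bol's identity for $\partial_{\tau}^{2}$); the verification for $w$ then rests on the conformality relation $\sum_{k}\epsilon_{k}\partial_{k}(wZ)_{j}\partial_{k}(wZ)_{l}=4(Z^{2})^{-2}\epsilon_{j}\delta_{jl}$, the harmonicity $\Delta_{K_{\mathbb{C}}}^{h}\big((Z^{2})^{1-b_{-}/2}\big)=0$, and homogeneity. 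Your route is conceptually cleaner and makes the origin of the exponent $m+1-\frac{b_{-}}{2}$ transparent; the paper's route keeps the computation in the form of operator transformation laws that it quotes from the literature. One point your sketch elides: for $b_{-}\neq2$ there are two separate sources of first-order terms, the Leibniz cross term $2\sum_{k}\epsilon_{k}\partial_{k}\big[J^{1-b_{-}/2}\big]\partial_{k}\big[H\circ w\big]$ and the chain-rule term $J^{1-b_{-}/2}\sum_{j}(\partial_{j}H)(wZ)\Delta_{K_{\mathbb{C}}}^{h}(wZ)_{j}$, and neither vanishes on its own; they cancel against each other because $\Delta_{K_{\mathbb{C}}}^{h}(wZ)_{j}=2(2-b_{-})(wZ)_{j}/Z^{2}$ exactly offsets the Euler-operator identity $\sum_{k}z_{k}\partial_{k}(wZ)_{j}=-(wZ)_{j}$ appearing in the cross term. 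I have checked that this cancellation goes through, so the argument is complete once that computation is written out.
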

The different descriptions of $R_{m}^{(b_{-})}$ and $L^{(b_{-})}$ coincide by
Lemma \ref{LappowMov} below. Theorem \ref{wcop} has the following standard
\begin{cor}
If $\Phi$ is an automorphic form of weight $m$ on $G(V) \cong
K_{\mathbb{R}}+iC$ then $R_{m}^{(b_{-})}\Phi$ and $L^{(b_{-})}\Phi$ are
automorphic forms on $K_{\mathbb{R}}+iC$ which have weights $m+2$ and $m-2$
respectively. \label{wt+-2}
\end{cor}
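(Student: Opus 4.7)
The plan is to derive the corollary as an immediate formal consequence of Theorem \ref{wcop}, using only the slash-operator intertwining identities stated there and the definition of an automorphic form.

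First I would unpack what the hypothesis says: since $\Phi$ is an automorphic form of weight $m$ with respect to $\Gamma$, the equivalent formulation given just before the statement of the theorem gives $\Phi[M]_{m}=\Phi$ for every $M\in\Gamma$. Note also that $\Phi$ is assumed $\mathcal{C}^{2}$ (as is implicit in calling it an automorphic form on which second-order operators act), so Theorem \ref{wcop} applies to $F=\Phi$.

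Next I would apply $R_{m}^{(b_{-})}$ to both sides of $\Phi[M]_{m}=\Phi$. Using the first intertwining identity of Theorem \ref{wcop}, the left-hand side equals $(R_{m}^{(b_{-})}\Phi)[M]_{m+2}$, while the right-hand side is simply $R_{m}^{(b_{-})}\Phi$. Hence $(R_{m}^{(b_{-})}\Phi)[M]_{m+2}=R_{m}^{(b_{-})}\Phi$ for every $M\in\Gamma$, which is precisely the statement that $R_{m}^{(b_{-})}\Phi$ is an automorphic form of weight $m+2$ with respect to $\Gamma$. The argument for $L^{(b_{-})}\Phi$ is identical, using the second intertwining identity to conclude that $(L^{(b_{-})}\Phi)[M]_{m-2}=L^{(b_{-})}\Phi$, i.e., that $L^{(b_{-})}\Phi$ is an automorphic form of weight $m-2$.

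There is no real obstacle here; the corollary is a purely formal consequence of the equivariance of $R_{m}^{(b_{-})}$ and $L^{(b_{-})}$ under the slash operator, and the substantive content has already been absorbed into Theorem \ref{wcop} itself. The only minor points worth verifying in passing are that $\Gamma\subseteq O^{+}(V)$ so that the theorem's hypothesis on $M$ is satisfied, and that the regularity of $\Phi$ is sufficient for the second-order operators to be applied pointwise; both are built into the setup of automorphic forms on $K_{\mathbb{R}}+iC$ fixed earlier in the section.
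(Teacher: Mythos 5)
Your proof is correct and is exactly the standard argument the paper intends (it labels the corollary as "standard" and gives no further proof): apply the intertwining identities of Theorem \ref{wcop} to the invariance $\Phi[M]_{m}=\Phi$ for $M\in\Gamma$. Nothing further is needed.
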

In correspondence with Theorem \ref{wcop} and Corollary \ref{wt+-2} we call
$R_{m}^{(b_{-})}$ and $L^{(b_{-})}$ the \emph{weight raising operator of weight
$m$} and the \emph{weight lowering operator} for automorphic forms on
Grassmannians of signature $(2,b_{-})$ respectively. As already mentioned in the
Introduction, these operators may also be given a Lie-theoretic description (see
Section \ref{Proofswcop} for more details). However, the explicit operators
appearing in Theorem \ref{wcop} are more useful for our applications.

\medskip

We shall make use of the operator
\[D^{*}\overline{D^{*}}-\frac{\overline{D^{*}}}{2i}=\overline{D^{*}}D^{*}+\frac{
D^{*}}{2i}=\sum_{k,l}y_{k}y_{l}\partial_{k}\partial_{\overline{l}},\] which we
denote $|D^{*}|^{2}$. Lemma 2.2 of \cite{[Ze2]} shows that
\[\Delta_{m,n}^{(b_{-})}=8|D^{*}|^{2}-4Y^{2}\Delta_{K_{\mathbb{C}}}^{\mathbb{R}}
-4im\overline{D^{*}}+4inD^{*}+2n(2m-b_{-})\] is the \emph{weight $(m,n)$
Laplacian} on $K_{\mathbb{R}}+iC$, and the \emph{weight $m$ Laplacian}
$\Delta_{m}^{(b_{-})}$ is just $\Delta_{m,0}^{(b_{-})}$ (this extends the
corresponding assertion of \cite{[Na]}, since his operator $\Delta_{1}$ is our
$\Delta_{0}^{(b_{-})}$ divided by 8). The constants are normalized such that
\begin{equation}
\Delta^{(b_{-})}_{m,n}(Y^{2})^{t}=(Y^{2})^{t}\Delta^{(b_{-})}_{m+t,n+t}
\label{Y2Delta}
\end{equation}
holds for every $m$, $n$, and $t$ (see the remark after Lemma \ref{LappowMov}
below). The relations between $R_{m}^{(b_{-})}$, $L^{(b_{-})}$, and the
corresponding Laplacians are given by
\begin{prop}
The equalities
\[\Delta_{m+2}^{(b_{-})}R_{m}^{(b_{-})}-R_{m}^{(b_{-})}\Delta_{m}^{(b_{-})}=(2b_
{-}-4m-4)R_{m}^{(b_{-})}\]
and
\[\Delta_{m-2}^{(b_{-})}L^{(b_{-})}-L^{(b_{-})}\Delta_{m}^{(b_{-})}=(4m-2b_{-}
-4)L^{(b_{-})}\] hold for every $m\in\mathbb{Z}$. \label{LapRmL}
\end{prop}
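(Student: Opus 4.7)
I would work from the conjugated description $R_m^{(b_-)}=(Y^2)^{-t_m}\Delta_{K_\mathbb{C}}^h(Y^2)^{t_m}$ with $t_m=m+1-\tfrac{b_-}{2}$ (part of Theorem \ref{wcop}, validated by Lemma \ref{LappowMov}). Applying Equation \eqref{Y2Delta} to commute $\Delta_{m+2}^{(b_-)}$ past $(Y^2)^{-t_m}$ and $(Y^2)^{t_m}$ past $\Delta_m^{(b_-)}$ rewrites the first target as
\[
\Delta_{m+2}^{(b_-)}R_m^{(b_-)}-R_m^{(b_-)}\Delta_m^{(b_-)}=(Y^2)^{-t_m}\bigl[\Delta_{M+2,N}^{(b_-)}\Delta_{K_\mathbb{C}}^h-\Delta_{K_\mathbb{C}}^h\Delta_{M,N}^{(b_-)}\bigr](Y^2)^{t_m},
\]
with $M=\tfrac{b_-}{2}-1$ and $N=-t_m$, so that the predicted constant $4N=2b_--4m-4$ appears automatically on the right-hand side. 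From the explicit formula for $\Delta_{m,n}^{(b_-)}$ one reads off $\Delta_{M+2,N}^{(b_-)}-\Delta_{M,N}^{(b_-)}=-8i\overline{D^*}+8N$, so the first assertion of the proposition is equivalent to the single bracket identity $[\Delta_{M,N}^{(b_-)},\Delta_{K_\mathbb{C}}^h]=(8i\overline{D^*}-4N)\Delta_{K_\mathbb{C}}^h$.

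To verify this I would evaluate the bracket termwise. The elementary commutators $[\Delta_{K_\mathbb{C}}^h,D^*]=-i\Delta_{K_\mathbb{C}}^h$ and $[\Delta_{K_\mathbb{C}}^h,\overline{D^*}]=-i\Delta_{K_\mathbb{C}}^{\mathbb{R}}$, both obtained from $[\partial_j,y_k]=-\tfrac{i}{2}\delta_{jk}$ summed against the Lorentzian form on $K_{\mathbb{R}}$, handle the $-4iM\overline{D^*}$ and $4iND^*$ pieces of $\Delta_{M,N}^{(b_-)}$ and produce $+4M\Delta_{K_\mathbb{C}}^{\mathbb{R}}$ and $-4N\Delta_{K_\mathbb{C}}^h$ respectively. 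After inserting $M=\tfrac{b_-}{2}-1$ the problem collapses to the $m$-independent identity
\[
\bigl[8|D^*|^2-4Y^2\Delta_{K_\mathbb{C}}^{\mathbb{R}},\,\Delta_{K_\mathbb{C}}^h\bigr]=8i\overline{D^*}\,\Delta_{K_\mathbb{C}}^h-(2b_--4)\,\Delta_{K_\mathbb{C}}^{\mathbb{R}},
\]
which I would establish by direct Leibniz expansion of the two constituent brackets, again using $[\partial_j,y_k]=-\tfrac{i}{2}\delta_{jk}$. Each of $[|D^*|^2,\Delta_{K_\mathbb{C}}^h]$ and $[Y^2\Delta_{K_\mathbb{C}}^{\mathbb{R}},\Delta_{K_\mathbb{C}}^h]$ generates a third-order $Y^2\Delta^{\mathbb{R}}\Delta^h$-shaped term together with first- and second-order residues; in the combination $8[\cdot,\cdot]-4[\cdot,\cdot]$ the leading third-order pieces cancel, and the constant $2b_--4$ emerges as the trace $2-b_-$ of the Lorentzian inverse form on $K_{\mathbb{R}}$ contracted twice against $[\partial_j,y_k]$.

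For the lowering operator, the same reduction applied to $L^{(b_-)}=(Y^2)^2\overline{R_0^{(b_-)}}=(Y^2)^{b_-/2+1}\Delta_{K_\mathbb{C}}^{\overline h}(Y^2)^{1-b_-/2}$ (with $\Delta_{K_\mathbb{C}}^{\overline h}$ in place of $\Delta_{K_\mathbb{C}}^h$ and the conjugate elementary commutators $[\Delta_{K_\mathbb{C}}^{\overline h},\overline{D^*}]=i\Delta_{K_\mathbb{C}}^{\overline h}$, $[\Delta_{K_\mathbb{C}}^{\overline h},D^*]=i\Delta_{K_\mathbb{C}}^{\mathbb{R}}$) reduces the second claim to the complex conjugate of the displayed bracket identity, namely $[8|D^*|^2-4Y^2\Delta_{K_\mathbb{C}}^{\mathbb{R}},\Delta_{K_\mathbb{C}}^{\overline h}]=-8iD^*\Delta_{K_\mathbb{C}}^{\overline h}-(2b_--4)\Delta_{K_\mathbb{C}}^{\mathbb{R}}$. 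Since $|D^*|^2$, $Y^2\Delta_{K_\mathbb{C}}^{\mathbb{R}}$, and $\Delta_{K_\mathbb{C}}^{\mathbb{R}}$ are real while $\Delta_{K_\mathbb{C}}^{\overline h}=\overline{\Delta_{K_\mathbb{C}}^h}$ and $D^*=\overline{\overline{D^*}}$, this is the literal complex conjugate of the $R$-identity and requires no new computation. The main obstacle in the whole argument is therefore the termwise Leibniz expansion producing the middle display: one must match several inequivalent shapes of residue (from contracting two copies of $[\partial_j,y_k]$ with the Lorentzian inverse metric) against the coefficients $8$ and $-4$, and check that everything except the predicted $\overline{D^*}\Delta_{K_\mathbb{C}}^h$ and $\Delta_{K_\mathbb{C}}^{\mathbb{R}}$ contributions cancels with the correct sign.
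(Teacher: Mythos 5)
Your proposal is correct, and it reaches the result by a genuinely different decomposition than the paper. The paper conjugates only by $(Y^{2})^{m}$ (working with the modified operators $\widetilde{\Delta}_{m,n}^{(b_{-})}$), so it is left with the two-term operator $R_{0}^{(b_{-})}=\Delta_{K_{\mathbb{C}}}^{h}+\frac{i(b_{-}-2)}{Y^{2}}D^{*}$; it then strips the residual $m$-dependence using $[D^{*},\Delta_{K_{\mathbb{C}}}^{h}]=i\Delta_{K_{\mathbb{C}}}^{h}$ and $[D^{*},\frac{D^{*}}{Y^{2}}]=\frac{iD^{*}}{Y^{2}}$, and must compute four substantial commutators, including the comparatively messy $[|D^{*}|^{2},\frac{D^{*}}{Y^{2}}]$ and $[Y^{2}\Delta_{K_{\mathbb{C}}}^{\mathbb{R}},\frac{D^{*}}{Y^{2}}]$ (which introduce $\widetilde{(D^{*})^{2}}$). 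You instead conjugate by the full power $(Y^{2})^{m+1-\frac{b_{-}}{2}}$, which reduces $R_{m}^{(b_{-})}$ to the bare $\Delta_{K_{\mathbb{C}}}^{h}$ at the cost of shifting the Laplacians to the two-index operators $\Delta_{M,N}^{(b_{-})}$ with $M=\frac{b_{-}}{2}-1$, $N=\frac{b_{-}}{2}-m-1$; all the $m$-dependence is then carried by the single elementary commutator $[4iND^{*},\Delta_{K_{\mathbb{C}}}^{h}]=-4N\Delta_{K_{\mathbb{C}}}^{h}$, and only the two commutators $[|D^{*}|^{2},\Delta_{K_{\mathbb{C}}}^{h}]$ and $[Y^{2}\Delta_{K_{\mathbb{C}}}^{\mathbb{R}},\Delta_{K_{\mathbb{C}}}^{h}]$ remain to be computed. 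I checked your reductions against the paper's explicit commutators: $\Delta_{M+2,N}^{(b_{-})}-\Delta_{M,N}^{(b_{-})}=-8i\overline{D^{*}}+8N$ is right, the target bracket identity $[\Delta_{M,N}^{(b_{-})},\Delta_{K_{\mathbb{C}}}^{h}]=(8i\overline{D^{*}}-4N)\Delta_{K_{\mathbb{C}}}^{h}$ is equivalent to the proposition, and your core $m$-independent display follows from $8\big(i\overline{D^{*}}\Delta_{K_{\mathbb{C}}}^{h}+iD^{*}\Delta_{K_{\mathbb{C}}}^{\mathbb{R}}+\frac{1}{2}\Delta_{K_{\mathbb{C}}}^{\mathbb{R}}\big)-4\big(2iD^{*}\Delta_{K_{\mathbb{C}}}^{\mathbb{R}}+\frac{b_{-}}{2}\Delta_{K_{\mathbb{C}}}^{\mathbb{R}}\big)$. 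The reduction of the $L^{(b_{-})}$ statement to the literal complex conjugate (with the two indices of $\Delta_{M,N}^{(b_{-})}$ swapped, which costs only an irrelevant additive constant inside a commutator) also works. Your route buys a shorter computation and avoids the auxiliary operator $\widetilde{(D^{*})^{2}}$ entirely, at the price of manipulating Laplacians of non-integral bi-weight. One small imprecision in your prose: the terms that cancel between the two constituent brackets are the third-order $D^{*}\Delta_{K_{\mathbb{C}}}^{\mathbb{R}}$ pieces, not ``$Y^{2}\Delta_{K_{\mathbb{C}}}^{\mathbb{R}}\Delta_{K_{\mathbb{C}}}^{h}$-shaped'' (fourth-order) ones, and the third-order piece $8i\overline{D^{*}}\Delta_{K_{\mathbb{C}}}^{h}$ survives as the predicted right-hand side; your final sentence shows you are aware of this, so it does not affect the argument.
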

We recall that an automorphic form of weight $m$ on $K_{\mathbb{R}}+iC$ is said
to have eigenvalue $\lambda$ if it is annihilated by
$\Delta_{m}^{(b_{-})}+\lambda$ (i.e., eigenvalues are of
$-\Delta_{m}^{(b_{-})}$). Hence Proposition \ref{LapRmL} has the following
\begin{cor}
If $F$ is an automorphic form of weight $m$ on $K_{\mathbb{R}}+iC$ which has
eigenvalue $\lambda$ then the automorphic forms $R_{m}^{(b_{-})}F$ and
$L^{(b_{-})}F$ have eigenvalues $\lambda+4m-2b_{-}+4$ and
$\lambda-4m+2b_{-}+4$ respectively. \label{evmov}
\end{cor}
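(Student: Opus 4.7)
The proof is a direct application of Proposition \ref{LapRmL} together with Corollary \ref{wt+-2}. The plan is as follows.

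First I would note that, by Corollary \ref{wt+-2}, the functions $R_{m}^{(b_{-})}F$ and $L^{(b_{-})}F$ are automorphic forms on $K_{\mathbb{R}}+iC$ of weights $m+2$ and $m-2$ respectively, so it makes sense to speak of their eigenvalues as eigenfunctions of $-\Delta_{m+2}^{(b_{-})}$ and $-\Delta_{m-2}^{(b_{-})}$. Thus the task reduces to computing $\Delta_{m+2}^{(b_{-})}R_{m}^{(b_{-})}F$ and $\Delta_{m-2}^{(b_{-})}L^{(b_{-})}F$.

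For the raising operator, I would apply the first commutation identity of Proposition \ref{LapRmL} to $F$ and then substitute the hypothesis $\Delta_{m}^{(b_{-})}F=-\lambda F$:
\[\Delta_{m+2}^{(b_{-})}R_{m}^{(b_{-})}F=R_{m}^{(b_{-})}\Delta_{m}^{(b_{-})}F+(2b_{-}-4m-4)R_{m}^{(b_{-})}F=-\bigl(\lambda+4m-2b_{-}+4\bigr)R_{m}^{(b_{-})}F,\]
which, under the sign convention of the statement, identifies $\lambda+4m-2b_{-}+4$ as the eigenvalue of $R_{m}^{(b_{-})}F$.

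The lowering case is entirely parallel: using the second identity of Proposition \ref{LapRmL} and $\Delta_{m}^{(b_{-})}F=-\lambda F$ one gets
\[\Delta_{m-2}^{(b_{-})}L^{(b_{-})}F=L^{(b_{-})}\Delta_{m}^{(b_{-})}F+(4m-2b_{-}-4)L^{(b_{-})}F=-\bigl(\lambda-4m+2b_{-}+4\bigr)L^{(b_{-})}F,\]
giving the asserted eigenvalue $\lambda-4m+2b_{-}+4$. There is no genuine obstacle here, since all the analytic content has already been packaged into Proposition \ref{LapRmL}; the corollary is just a one-line substitution in each case, together with bookkeeping of the sign convention relating the operator $\Delta_{m}^{(b_{-})}$ to its eigenvalue.
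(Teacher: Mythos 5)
Your proposal is correct and is exactly the argument the paper intends: the corollary is stated as an immediate consequence of Proposition \ref{LapRmL} together with the convention that eigenvalues are taken with respect to $-\Delta_{m}^{(b_{-})}$, and your substitution reproduces both eigenvalue shifts with the right signs.
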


\smallskip

By evaluating compositions of the weight changing operators one shows
\begin{prop}
The combination
\[\Xi_{m}^{(b_{-})}=(Y^{2})^{2}\Delta_{K_{\mathbb{C}}}^{h}\Delta_{K_{\mathbb{C}}
}^{\overline{h}}-iY^{2}(2m+2-b_{-})D^{*}\Delta_{K_{\mathbb{C}}}^{\overline{h}}
+iY^{2}(2-b_{-})\overline{D^{*}}\Delta_{K_{\mathbb{C}}}^{h}+\]
\[+\frac{(2-b_{-})(2m+2-b_{-})}{2}Y^{2}\Delta_{K_{\mathbb{C}}}^{\mathbb{R}}
-\frac{m(2m+2-b_{-})}{2}Y^{2}\Delta_{K_{\mathbb{C}}}^{\overline{h}}\] commutes
with all the weight $m$ slash operators as well as with the Laplacian
$\Delta_{m}^{(b_{-})}$. The commutator of the global weight raising operator and
the weight lowering operator is
\[\big[R^{(b_{-})},L^{(b_{-})}\big]_{m}=R_{m-2}^{(b_{-})}L^{(b_{-})}-L^{(b_{-})}
R_{m}^{(b_{-})}=\frac{m\Delta_{m}^{(b_{-})}}{2}-\frac{mb_{-}(2m-2-b_{-})}{4}.\]
\label{RLcomp}
\end{prop}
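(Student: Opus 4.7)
The plan is to exploit Theorem \ref{wcop}: since $R_{m}^{(b_{-})}$ intertwines the weight $m$ and weight $m+2$ slash actions while $L^{(b_{-})}$ sends weight $m+2$ back to weight $m$, the composition $L^{(b_{-})}R_{m}^{(b_{-})}$ is weight-preserving and therefore commutes with every weight $m$ slash operator; the same holds for $R_{m-2}^{(b_{-})}L^{(b_{-})}$ and for $\Delta_{m}^{(b_{-})}$ itself. My goal is to express $\Xi_{m}^{(b_{-})}$ as an explicit linear combination of $L^{(b_{-})}R_{m}^{(b_{-})}$, $\Delta_{m}^{(b_{-})}$, and a constant---from which commutation of $\Xi_{m}^{(b_{-})}$ with slash operators is immediate---and then to obtain the commutator formula by an analogous computation of $R_{m-2}^{(b_{-})}L^{(b_{-})}$.

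For the first claim, I would apply $L^{(b_{-})}=(Y^{2})^{2}\Delta_{K_{\mathbb{C}}}^{\overline{h}}+iY^{2}(2-b_{-})\overline{D^{*}}$ term by term to $R_{m}^{(b_{-})}f$, using the product-rule identities
\[(Y^{2})^{2}\Delta_{K_{\mathbb{C}}}^{\overline{h}}\big[(Y^{2})^{-1}g\big]=Y^{2}\Delta_{K_{\mathbb{C}}}^{\overline{h}}g-2i\overline{D^{*}}g+\tfrac{b_{-}-4}{2}g,\qquad Y^{2}\overline{D^{*}}\big[(Y^{2})^{-1}g\big]=\overline{D^{*}}g-ig,\]
which come out directly after differentiating $(Y^{2})^{-1}$ and collapsing the resulting Lorentzian sums back to $Y^{2}$ and to $\overline{D^{*}}$. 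To rearrange the mixed terms I would also invoke the commutator $[\Delta_{K_{\mathbb{C}}}^{\overline{h}},D^{*}]=i\Delta_{K_{\mathbb{C}}}^{\mathbb{R}}$ and the identities $\overline{D^{*}}D^{*}=|D^{*}|^{2}+\tfrac{i}{2}D^{*}$ and $D^{*}\overline{D^{*}}=|D^{*}|^{2}-\tfrac{i}{2}\overline{D^{*}}$ (both implicit in the definition of $|D^{*}|^{2}$ recalled just above the proposition). Substituting $\Delta_{m}^{(b_{-})}=8|D^{*}|^{2}-4Y^{2}\Delta_{K_{\mathbb{C}}}^{\mathbb{R}}-4im\overline{D^{*}}$ from Lemma 1.2 of \cite{[Ze]} at the very end, I expect the identity
\[L^{(b_{-})}R_{m}^{(b_{-})}=\Xi_{m}^{(b_{-})}-\frac{b_{-}(2m+2-b_{-})}{8}\Delta_{m}^{(b_{-})}+\frac{mb_{-}(2m+2-b_{-})}{4}.\]
Commutation of $\Xi_{m}^{(b_{-})}$ with $\Delta_{m}^{(b_{-})}$ then follows from Proposition \ref{LapRmL}: the eigenvalue shift $(2b_{-}-4m-4)$ on the $R$ side and the shift $(4(m+2)-2b_{-}-4)=4m+4-2b_{-}$ coming from $L$ applied in weight $m+2$ sum to zero, so $\Delta_{m}^{(b_{-})}$ commutes with $L^{(b_{-})}R_{m}^{(b_{-})}$ and hence with $\Xi_{m}^{(b_{-})}$.

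For the commutator formula I would redo the expansion with $R_{m-2}^{(b_{-})}L^{(b_{-})}$ in place of $L^{(b_{-})}R_{m}^{(b_{-})}$, most cleanly via the telescoped factorizations
\[L^{(b_{-})}R_{m}^{(b_{-})}=(Y^{2})^{\frac{b_{-}}{2}+1}\Delta_{K_{\mathbb{C}}}^{\overline{h}}(Y^{2})^{-m}\Delta_{K_{\mathbb{C}}}^{h}(Y^{2})^{m+1-\frac{b_{-}}{2}},\]
\[R_{m-2}^{(b_{-})}L^{(b_{-})}=(Y^{2})^{\frac{b_{-}}{2}-m+1}\Delta_{K_{\mathbb{C}}}^{h}(Y^{2})^{m}\Delta_{K_{\mathbb{C}}}^{\overline{h}}(Y^{2})^{1-\frac{b_{-}}{2}}\]
supplied by Theorem \ref{wcop}. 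Because $\Delta_{K_{\mathbb{C}}}^{h}$ and $\Delta_{K_{\mathbb{C}}}^{\overline{h}}$ are constant-coefficient and therefore commute, the leading $(Y^{2})^{2}\Delta_{K_{\mathbb{C}}}^{h}\Delta_{K_{\mathbb{C}}}^{\overline{h}}$ cancels in the difference, and what remains---after pulling all powers of $Y^{2}$ outside via Lemma \ref{LappowMov}---involves only $D^{*}$, $\overline{D^{*}}$, $|D^{*}|^{2}$, and $Y^{2}\Delta_{K_{\mathbb{C}}}^{\mathbb{R}}$. Re-expressing these through $\Delta_{m}^{(b_{-})}$ then yields exactly $\frac{m\Delta_{m}^{(b_{-})}}{2}-\frac{mb_{-}(2m-2-b_{-})}{4}$.

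The main obstacle is the bookkeeping of the many partial cancellations among constant multiples of $D^{*}f$, $\overline{D^{*}}f$, and the pure-function pieces that arise in the expansion: for instance, the $D^{*}f$-only contributions from $\Delta_{K_{\mathbb{C}}}^{\overline{h}}[(Y^{2})^{-1}D^{*}f]$ and from $\overline{D^{*}}[(Y^{2})^{-1}D^{*}f]$ cancel only after being combined via $\overline{D^{*}}D^{*}-D^{*}\overline{D^{*}}=-\tfrac{i}{2}(D^{*}+\overline{D^{*}})$, and a single sign slip in collapsing a Lorentzian sum would destroy the match with the stated forms of $\Xi_{m}^{(b_{-})}$ and of the commutator. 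Keeping everything in the telescoped factored form above and expanding only at the very last step is the cleanest way to control this.
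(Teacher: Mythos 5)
Your strategy is exactly the paper's: both compositions $L^{(b_{-})}R_{m}^{(b_{-})}$ and $R_{m-2}^{(b_{-})}L^{(b_{-})}$ are expanded by moving powers of $Y^{2}$ through the operators (via their conjugated forms from Theorem \ref{wcop}) and invoking the commutators $[\Delta_{K_{\mathbb{C}}}^{\overline{h}},D^{*}]=i\Delta_{K_{\mathbb{C}}}^{\mathbb{R}}$ and $[\Delta_{K_{\mathbb{C}}}^{h},\overline{D^{*}}]=-i\Delta_{K_{\mathbb{C}}}^{\mathbb{R}}$ together with the $|D^{*}|^{2}$ identities, arriving at the same key identity $L^{(b_{-})}R_{m}^{(b_{-})}=\Xi_{m}^{(b_{-})}-\frac{b_{-}(2m+2-b_{-})}{8}\Delta_{m}^{(b_{-})}+\frac{mb_{-}(2m+2-b_{-})}{4}$ that the paper derives, and deducing commutation with the slash operators and with $\Delta_{m}^{(b_{-})}$ from Theorem \ref{wcop} and Proposition \ref{LapRmL} just as in the text. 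One minor slip worth fixing when you carry out the computation: by Lemma \ref{LappowMov} with $\alpha=-1$ the constant term in $(Y^{2})^{2}\Delta_{K_{\mathbb{C}}}^{\overline{h}}\big[(Y^{2})^{-1}g\big]$ is $-\frac{b_{-}}{2}g$, not $\frac{b_{-}-4}{2}g$; this does not affect your (correctly stated) target identities or the overall argument.
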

Proposition \ref{RLcomp} provides another proof to Lemma 2.2 of \cite{[Ze2]}
about $\Delta_{m}^{(b_{-})}$. It also implies that $\Xi_{m}^{(b_{-})}$ preserves
the spaces of automorphic forms of weight $m$ for all $m\in\mathbb{Z}$ and for
every discrete subgroup $\Gamma$ of cofinite volume in $O^{+}(V)$. It also
commutes with $\Delta_{m}^{(b_{-})}$, hence preserves eigenvalues of such
automorphic forms. By rank considerations, one can probably show that the ring
of differential operators which commute with all the slash operators of weight
$m$ is generated by $\Delta_{m}^{(b_{-})}$ and $\Xi_{m}^{(b_{-})}$, hence is
a polynomial ring in two variables (if $b_{-}>1$). This assertion should also follow from part (3) of Theorem 3.3 of \cite{[Sh5]} (since the rank of the symmetric space $G(V)$ is 2 if $b_{-}>1$), though I have not verified this in detail. As $\Delta_{0}^{(b_{-})}$ is $8\Delta_{1}$ and $\Xi_{0}^{(b_{-})}$ is $16\Delta_{2}$ in the notation of \cite{[Na]}, Proposition \ref{RLcomp} generalizes the main result of that reference to other weights. A similar argument yields results of the same sort for $(m,n)$, where a possible normalization for $\Xi_{m,n}^{(b_{-})}$ is $(Y^{2})^{-n}\Xi_{m-n}^{(b_{-})}(Y^{2})^{n}$ for which an equality similar to Equation \eqref{Y2Delta} holds. We shall not need these results in what follows.

\smallskip

We now consider compositions of the weight raising operators. The natural $l$th
power of $R_{m}^{(b_{-})}$ is the composition
\[(R_{m}^{(b_{-})})^{l}=R_{m+2l-2}^{(b_{-})}\circ\ldots \circ R_{m}^{(b_{-})}.\]
The general formulae for the resulting operator seems too complicated to write
as a combination of $\Delta_{K_{\mathbb{C}}}^{h}$, $D^{*}$, and
$\frac{1}{Y^{2}}$ with explicit coefficients. However, we can establish the
properties given in the following
\begin{prop}
$(i)$ The operator $(R_{m}^{(b_{-})})^{l}$ takes automorphic forms of weight $m$
on $G(L_{\mathbb{R}})$ to automorphic forms of weight $m+2l$. $(ii)$ In case the
former automorphic form is an eigenfunction with eigenvalue $\lambda$, the
latter is also an eigenfunction, and the corresponding eigenvalue is
$\lambda+l(4m+4l-2b_{-})$. $(iii)$ The operator $(R_{m}^{(b_{-})})^{l}$ can be
written as
\[(R_{m}^{(b_{-})})^{l}=\sum_{c=0}^{l}\sum_{a=0}^{c}A_{a,c}^{(l)}\frac{(iD^{*})^
{c-a}(\Delta_{K_{\mathbb{C}}}^{h})^{l-c}}{(-Y^{2})^{c}},\] where
$A_{0,0}^{(0)}=1$ and given the coefficients $A_{a,c}^{(l)}$ for given $l$, the
coefficient $A_{a,c}^{(l+1)}$ of the next power $l+1$ is defined recursively as
\[\sum_{s=0}^{a}\binom{c-s}{a-s}A_{s,c}^{(l)}+(2m+4l-2c+4-b_{-})\bigg(A_{a,c-1}^
{(l)}+\frac{m+2l-c+1}{2}A_{a-1,c-1}^{(l)}\bigg).\] $(iv)$ For $a=0$ the
coefficients $A_{0,c}^{(l)}$ are given by the explicit formula
\[A_{0,c}^{(l)}=\frac{l!\cdot2^{c}}{(l-c)!}\binom{m+l-\frac{b_{-}}{2}}{c}.\]
\label{Rmpowl}
\end{prop}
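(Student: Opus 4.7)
The plan is to prove all four parts by induction on $l$. Parts (i) and (ii) are immediate from earlier results: (i) follows by induction from Corollary \ref{wt+-2}, since if $(R_m^{(b_-)})^l F$ is automorphic of weight $m+2l$, then applying $R_{m+2l}^{(b_-)}$ yields weight $m+2l+2$. For (ii), Corollary \ref{evmov} says each step $k$ contributes $4(m+2k)-2b_-+4$ to the eigenvalue, and $\sum_{k=0}^{l-1}(4m+8k+4-2b_-) = 4lm + 4l(l-1) + l(4-2b_-) = l(4m+4l-2b_-)$, as asserted.

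For (iii), I would induct on $l$, the base case $l=0$ giving trivially $A_{0,0}^{(0)}=1$. For the inductive step I would write $(R_m^{(b_-)})^{l+1} = R_{m+2l}^{(b_-)} \circ (R_m^{(b_-)})^l$, expand $R_{m+2l}^{(b_-)}$ into its three pieces $\Delta_{K_{\mathbb{C}}}^{h}$, $-\frac{i(2m+4l+2-b_-)}{Y^2}D^{*}$, and $-\frac{(m+2l)(2m+4l+2-b_-)}{2Y^2}$, and apply each to the generic summand $A_{a,c}^{(l)}\frac{(iD^{*})^{c-a}(\Delta_{K_{\mathbb{C}}}^{h})^{l-c}}{(-Y^2)^c}$ of the inductive representation. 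The two non-derivative pieces raise the power of $(-Y^2)^{-1}$ by one directly; after commuting the new $1/Y^2$ leftward through the $(iD^{*})$-string using the single-step relation $[iD^{*}, Y^{-2s}] = -sY^{-2s}$ (a direct calculation from $\partial_k Y^2 = -i\epsilon_k y_k$ in an orthonormal basis), they account for the contributions proportional to $A_{a,c-1}^{(l)}$ and $A_{a-1,c-1}^{(l)}$ in the asserted recursion. The $\Delta_{K_{\mathbb{C}}}^{h}$ piece must be commuted past $(-Y^2)^{-c}$ and then through $(iD^{*})^{c-a}$; here the relevant commutators $[\Delta_{K_{\mathbb{C}}}^{h}, Y^{-2s}]$ (producing a combination of $D^{*}/Y^{2s+2}$ and scalar $1/Y^{2s+2}$) and $[\Delta_{K_{\mathbb{C}}}^{h}, iD^{*}] = \Delta_{K_{\mathbb{C}}}^{h}$ follow by direct computation, and iterating them through the $c-a$ factors of $iD^{*}$ via a Leibniz-type expansion produces precisely the coefficient $\binom{c-s}{a-s}A_{s,c}^{(l)}$ for each $s \leq a$. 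Collecting all contributions in normal form reproduces the recursion. An alternative organization is to use the factored form $R_m^{(b_-)} = (Y^2)^{-(m+1-b_-/2)}\Delta_{K_{\mathbb{C}}}^{h}(Y^2)^{m+1-b_-/2}$ from Theorem \ref{wcop}, telescope the $l$-fold composition into a sandwich $(Y^2)^{-\mu_{l-1}}\Delta_{K_{\mathbb{C}}}^{h}(Y^2)^2 \cdots (Y^2)^2 \Delta_{K_{\mathbb{C}}}^{h}(Y^2)^{\mu_0}$, and expand using the same commutators.

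Part (iv) is a direct specialization. Setting $a=0$, only $s=0$ survives $\sum_{s=0}^{0}\binom{c-s}{-s}A_{s,c}^{(l)} = A_{0,c}^{(l)}$, and $A_{-1,c-1}^{(l)}=0$ by convention, so the recursion collapses to
\[A_{0,c}^{(l+1)} = A_{0,c}^{(l)} + (2m+4l-2c+4-b_-)A_{0,c-1}^{(l)}.\]
Setting $\alpha = m+l-b_-/2$, so that $2m+4l-2c+4-b_- = 2(\alpha+l-c+2)$, and reading off from the proposed closed form the ratio $A_{0,c}^{(l)}/A_{0,c-1}^{(l)} = 2(l-c+1)(\alpha-c+1)/c$, Pascal's identity $\binom{\alpha+1}{c} = \binom{\alpha}{c} + \binom{\alpha}{c-1}$ yields the recursion after a short algebraic check. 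This completes the induction.

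The main obstacle will be the combinatorial bookkeeping in (iii): the two non-derivative pieces of $R_{m+2l}^{(b_-)}$ are straightforward, but extracting the coefficient of $A_{s,c}^{(l)}$ contributed by the $\Delta_{K_{\mathbb{C}}}^{h}$ piece and recognizing it as precisely $\binom{c-s}{a-s}$ requires careful tracking of how $y$-factors produced by differentiating $Y^{-2c}$ are redistributed among the existing $iD^{*}$-factors as they are reorganized back into the normal form.
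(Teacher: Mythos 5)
Your proposal is correct, and for parts (i), (ii) and (iv) it matches the paper essentially verbatim: (i) and (ii) are disposed of in the main text exactly as you do, and for (iv) the paper likewise reduces to the two-term recursion $A_{0,c}^{(l+1)}=A_{0,c}^{(l)}+(2m+4l-2c+4-b_{-})A_{0,c-1}^{(l)}$ and checks the closed form against it via the identity $2(l-c+1)(m+l-c-\frac{b_{-}}{2}+1)+c(2m+4l-2c+4-b_{-})=2(l+1)(m+l-\frac{b_{-}}{2}+1)$, which is equivalent to your Pascal-identity computation. The genuine difference is in the inductive step of (iii). The paper never commutes the individual pieces of $R_{m+2l}^{(b_{-})}$ past $(-Y^{2})^{-c}$: it uses the sandwich form to obtain the index shift $R_{m+2l}^{(b_{-})}(-Y^{2})^{-c}=(-Y^{2})^{-c}R_{m+2l-c}^{(b_{-})}$ in one line, after which the $\frac{D^{*}}{Y^{2}}$- and $\frac{1}{Y^{2}}$-pieces of $R_{m+2l-c}^{(b_{-})}$ land directly on the terms with indices $(c+1,s)$ and $(c+1,s+1)$ carrying exactly the $c$-dependent coefficients of the recursion, and the only commutator needed is $[\Delta_{K_{\mathbb{C}}}^{h},iD^{*}]=\Delta_{K_{\mathbb{C}}}^{h}$; this is precisely your ``alternative organization,'' and it is markedly cleaner. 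Your primary route---commuting $\Delta_{K_{\mathbb{C}}}^{h}$, $\frac{D^{*}}{Y^{2}}$ and $\frac{1}{Y^{2}}$ separately past $(-Y^{2})^{-c}$---does work (I verified that all contributions recombine into the stated recursion), but two cautions are in order. First, your attribution of the $A_{a,c-1}^{(l)}$ and $A_{a-1,c-1}^{(l)}$ terms entirely to the two non-derivative pieces is not accurate: those pieces alone produce the coefficients $2m+4l+2-b_{-}$ and $(2m+4l+2-b_{-})\big(\frac{m+2l}{2}-c+1\big)$, and the corrections needed to reach $2m+4l-2c+4-b_{-}$ and $(2m+4l-2c+4-b_{-})\frac{m+2l-c+1}{2}$ come from the $\frac{D^{*}}{Y^{2}}$- and $\frac{1}{Y^{2}}$-terms generated by $[\Delta_{K_{\mathbb{C}}}^{h},(Y^{2})^{-(c-1)}]$, so the final ``collecting all contributions'' is doing real work. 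Second, if you were to read that commutator off Lemma \ref{LappowMov} as printed, note that its scalar term is misprinted (consistency with the displayed expansion of $R_{m}^{(b_{-})}$ forces it to be $\alpha(\alpha-1+\frac{b_{-}}{2})$ rather than $\alpha(\alpha+1-\frac{b_{-}}{2})$), and the printed version would spoil the coefficient match; computing the commutators directly, as you propose, everything closes.
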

The binomial symbol appearing in part $(iv)$ of Proposition \ref{Rmpowl} is the
\emph{extended binomial coefficient}: Indeed, for two non-negative integers $x$
and $n$ we have \[\binom{x}{n}=\frac{1}{n!}\prod_{j=0}^{n-1}(x-j),\] a formula
which makes sense for $x\in\mathbb{R}$ (as well as $x$ in any
$\mathbb{Q}$-algebra).

Part $(i)$ of Proposition \ref{Rmpowl} follows immediately from Corollary
\ref{wt+-2}. For part $(ii)$ Corollary \ref{evmov} shows that the application of
$R_{m+2r}$ (for $0 \leq r \leq l-1$) to an eigenfunction adds $4m+8r+4-2b_{-}$
to the eigenvalue, so the assertion follows from evaluating
\[\sum_{r=0}^{l-1}(4m+8r+4-2b_{-})=l(4m+4l-2b_{-}).\] The proofs of parts
$(iii)$ and $(iv)$ are given in Section \ref{Proofswcop}.

\medskip

We recall that $M=\binom{a\ \ b}{c\ \ d} \in SL_{2}(\mathbb{R})$ defines the
holomorphic map
\[M:\tau\in\Big[\mathcal{H}=\big\{\tau=x+iy\in\mathbb{C}\big|y>0\big\}\Big]
\mapsto\frac{a\tau+b}{c\tau+d},\quad\mathrm{with}\quad j(M,\tau)=c\tau+d,\]
the latter being the factor of automorphy of this action. Modular forms of
weight $(k,l)$ (or just weight $k$ if $l=0$) with respect to a discrete subgroup
$\Gamma$ of $SL_{2}(\mathbb{R})$ with cofinite volume (with respect to the
invariant measure $\frac{dxdy}{y^{2}}$) are functions
$f:\mathcal{H}\to\mathbb{C}$ which are invariant under the corresponding weight
$(k,l)$ slash operators for elements of $\Gamma$. The weight $(k,l)$ Laplacian
is \[\Delta_{k,l}=4y^{2}\partial_{\tau}\partial_{\overline{\tau}}-2iky\partial_{
\overline{\tau}}+2ily\partial_{\tau}+l(k-1),\] normalized such that
$\Delta_{k}=\Delta_{k,0}$ annihilates holomorphic functions and the Laplacians
commute with powers of $y$ as in Equation \eqref{Y2Delta}. The
\emph{Shimura--Maa\ss\ operators}
\[\delta_{k}=y^{-k}\partial_{\tau}y^{k}=\partial_{\tau}+\frac{k}{2iy}
\quad\mathrm{and}\quad y^{2}\partial_{\overline{\tau}}\] (note the different
normalization from \cite{[Bru]} and \cite{[Ze2]}!) take modular forms of weight
$k$ to modular forms of weight $k+2$ and $k-2$ respectively, or more precisely,
satisfy an appropriate commutation relation with the slash operators for all the
elements of $SL_{2}(\mathbb{R})$. They also change Laplacian eigenvalues (again,
with respect to $-\Delta_{k}$ rather than $\Delta_{k}$): $\delta_{k}$ adds $k$
to the eigenvalue, while $y^{2}\partial_{\overline{\tau}}$ subtracts $k-2$ from
it. Moreover, the powers of the Shimura--Maa\ss\ operators are given by, e.g.,
Equation (56) in \cite{[Za]}, stating that
\[\delta_{k}^{l}=\delta_{k+2l-2}\circ\ldots\circ\delta_{k}=\sum_{r=0}^{l}\frac{
l!}{(l-r)!}\binom{k+l-1}{r}\frac{\partial_{\tau}^{l-r}}{(2iy)^{r}}\] (for
arbitrary $k$, not necessarily integral and non-negative). Theorem \ref{wcop}
and Proposition \ref{LapRmL} show that our weight changing operators
$R_{m}^{(b_{-})}$ and $L^{(b_{-})}$ have similar properties. However, our
operators are differential operators of order 2 while the Shimura--Maa\ss\
operators are of order 1. This is why the results of Propositions \ref{RLcomp}
and \ref{Rmpowl} are more complicated than the fact that
$\delta_{k-2}y^{2}\partial_{\overline{\tau}}$ is just $\frac{\Delta_{k}}{4}$,
the commutator $[\delta,y^{2}\partial_{\overline{\tau}}]_{k}$ being simply
$\frac{k}{4}$, and Equation (56) of \cite{[Za]}.

\smallskip

Nonetheless, the operators $R_{m}^{(b_{-})}$ and $L^{(b_{-})}$ for small values
of $b_{-}$ are closely related to the Shimura--Maa\ss\ operators. Indeed, for
$b_{-}=1$ the group $SO_{2,1}^{+}$ is $PSL_{2}(\mathbb{R})$ and the tube domain
$K_{\mathbb{R}}+iC$ is just $\mathcal{H}$. We have
\[J(M,\tau)=j^{2}(M,\tau),\quad\mathrm{hence}\quad
[M]_{m}=[M]_{2m}^{\mathcal{H}}\quad\mathrm{and}\quad\Delta_{m}^{(1)}=\Delta_{
2m}\] (the same assertions hold for the operators involving anti-holomorphic
weights). Our operators $R_{m}^{(1)}$ and $L^{(1)}$ are \emph{squares} of the
Shimura--Maa\ss\ operators, namely
\[R_{m}^{(1)}=\delta_{2m}^{2}=\delta_{2m+2}\delta_{2m}\quad\mathrm{and}\quad L^{
(1)}=(y^{2}\partial_{\overline{\tau}})^{2}.\] Note that in this case
\[\Xi_{m}^{(1)}=\frac{(\Delta_{2m})^{2}}{16}-\frac{m\Delta_{2m}}{8}\in\mathbb{C}
[\Delta_{m}^{(1)}=\Delta_{2m}],\] in accordance with the rank of the group being
1 rather than 2 (in particular, in the notation of \cite{[Na]} we have
$\Delta_{2}=\frac{\Delta_{1}}{4}$ in this case).

For $b_{-}>1$ many authors (including \cite{[Bru]} and \cite{[Na]}) take the
basis for $K_{\mathbb{R}}$ as two elements spanning a hyperbolic plane together
with an orthogonal basis of elements of norm $-2$. In elements of the positive
cone $C$, the first two coordinates are positive. In particular, for $b_{-}=2$
we have $K_{\mathbb{R}}+iC\cong\mathcal{H}\times\mathcal{H}$, with $\tau=x+iy$
and $\sigma=s+it$ being the two coordinates. The group $SO_{2,2}^{+}$ is an
order 2 quotient of $SL_{2}(\mathbb{R}) \times SL_{2}(\mathbb{R})$, acting on
$G(V)\cong\mathcal{H}\times\mathcal{H}$ through
\[(M,N):(\tau,\sigma)\mapsto(M\tau,N\sigma)\quad\mathrm{with}\quad
J\big((M,N),(\tau,\sigma)\big)=j(M,\tau)j(N,\sigma).\] It follows that
\[[M,N]_{m}=[M]_{m,\tau}^{\mathcal{H}}[N]_{m,\sigma}^{\mathcal{H}}\quad\mathrm{
and}\quad\Delta_{m}^{(2)}=2\Delta_{m,\tau}+2\Delta_{m,\sigma}\] (which extend to
the operators with anti-holomorphic weights as well). Our operators are
\[R_{m}^{(2)}=2\delta_{m,\tau}\delta_{m,\sigma},\quad
L^{(2)}=8y^{2}t^{2}\partial_{\overline{\tau}}\partial_{\overline{\sigma}}
\quad\mathrm{and}\quad\Xi_{m}^{(2)}=\Delta_{m,\tau}\Delta_{m,\sigma}.\] In
both cases $b_{-}=1$
and $b_{-}=2$ the assertions of this Section follow from properties of the
Shimura--Maa\ss\ operators (note that $Y^{2}$ is $2y^{2}$ for $b_{-}=1$). When
$b_{-}=2$ the special orthogonal group of a negative definite subspace is also
$SO(2)$, which makes the theory of automorphic forms more symmetric.

Working with $b_{-}=3$ in this model yields another coordinate $z=u+iv$. The
positivity of $y$, $t$, and $yt-v^{2}$ is equivalent to \[\Pi=\binom{\tau\ \
z}{z\ \ \sigma}\quad\mathrm{being\ in}\quad\mathcal{H}_{2}=\big\{\Pi=X+iY \in
M_{2}(\mathbb{C})\big|\Pi=\Pi^{t},\ Y\gg0\big\}.\] Hence $K_{\mathbb{R}}+iC$ is
identified with the Siegel upper half-plane of degree 2. The group
$SO_{2,3}^{+}$ is $PSp_{4}(\mathbb{R})$, with the symplectic action and the
factor of automorphy (hence the slash operators) from the theory of Siegel
modular forms. In this case
\[R_{m}^{(3)}=-\frac{M_{m}}{Y^{2}},\quad
L^{(3)}=-Y^{2}N_{0},\quad\mathrm{and}\quad\Delta_{m}^{(3)}=2Tr(\Omega_{m,0})\]
in the notation of \cite{[Ma1]} and \cite{[Ma2]} for degree 2 (for weight
$(m,n)$ the latter assertion extends to the modified Laplacian
$\widetilde{\Delta}_{m,n}^{(3)}$ presented in Section \ref{Proofswcop}). The
operator $\Delta_{K_{\mathbb{C}}}^{h}$ is also a constant multiple of the
operator $\mathbb{D}$ considered, for example, in \cite{[CE]} and \cite{[Ch]}.

\section{Images of Theta Lifts under $R_{m}^{(b_{-})}$ and $L^{(b_{-})}$
\label{Lifts}}

For natural $r$, $s$, $t$, and $l$ we define the polynomials
\[P_{r,s,t}(\mu,Z)=\frac{(\mu,Z_{V})^{r}(\mu,\overline{Z_{V}})^{t}}{(Y^{2})^{s}}
 \quad\mathrm{and}\quad
P_{r,s,t}^{(l)}(\mu,Z)=P_{r,s,t}(\mu,Z)(\mu_{-}^{2})^{l}.\] As a functions of
$\mu \in V$, the polynomial $P_{r,s,t}(\mu,Z)$, considered, e.g.,  in
\cite{[Ze2]}, is homogenous of degree $(r+t,0)$ with respect to the element of
$G(V)$ represented by $Z$, while $P_{r,s,t}^{(l)}(\mu,Z)$ has homogeneity degree
$(r+t,2l)$. Equation (5) of \cite{[Ze2]} extends from
$P_{r,s,t}=P_{r,s,t}^{(0)}$ to the more general polynomials $P_{r,s,t}^{(l)}$:
The equality
\begin{equation}
P_{r,s,t}^{(l)}(MZ,\mu)=j(M,Z)^{s-r}\overline{j(M,Z)}^{s-t}P_{r,s,t}^{(l)}(Z,M^{
-1}\mu) \label{Prstlmod}
\end{equation}
holds for every $\mu \in V$, $Z \in K_{\mathbb{R}}+iC$, $M \in O^{+}(V)$, and
$r$, $s$, $t$, and $l$ from $\mathbb{N}$.

We shall assume that $V=L_{\mathbb{R}}$ for some fixed even lattice $L$ (of
signature $(2,b_{-})$), and consider the theta function of $L$ which is based on
the polynomial $P_{r,s,t}^{(l)}$. These are (vector-valued) functions of
$\tau=x+iy\in\mathcal{H}$ and $Z \in K_{\mathbb{R}}+iC$, which are
sums of expressions of the form
\begin{equation}
F_{r,s,t}^{(l)}(\tau,Z,\mu)=e^{-\Delta_{v}/8\pi
y}(P_{r,s,t}^{(l)})(\mu,Z)\mathbf{e}\bigg(\tau\frac{\mu_{+}^{2}}{2}+\overline{
\tau}\frac{\mu_{-}^{2}}{2}\bigg). \label{Frstldef}
\end{equation}
Here $\mu_{\pm}$ are the parts of $\mu \in
V$ which lie in the spaces $v_{\pm}$ according to the element of $G(V)$
corresponding to $Z$, $\Delta_{v}$ is the Laplacian on $V$ which corresponds to
the \emph{majorant} of that element (i.e., to the bilinear form in which the
sign on the pairing on $v_{-}$ is inverted to be positive as well), and
$\mathbf{e}(w)=e^{2\pi iw}$ for every complex $w$. A simple and direct
calculation proves
\begin{lem}
$(i)$ We have the equality $\mu_{+}^{2}=P_{1,1,1}(\mu,Z)$. In addition, the
following equalities hold:
\[(ii)\quad\Delta_{v_{+}}P_{r,s,t}=4rtP_{r-1,s-1,t-1}.\quad(iii)\quad\Delta_{v_{
-}}(\mu_{-}^{2})^{l}=2l(2l+b_{-}-2)(\mu_{-}^{2})^{l-1}.\] \label{DeltavpmPrstl}
\end{lem}
Part $(i)$ of Lemma \ref{DeltavpmPrstl} shows that we can write the exponent in
Equation \eqref{Frstldef} as the constant
$\mathbf{e}\big(\overline{\tau}\frac{\mu^{2}}{2}\big)$ (independent of $Z$)
times $e^{-2\pi yP_{1,1,1}}$. Since the differences in the indices in part
$(ii)$ of Lemma \ref{DeltavpmPrstl} remain the same, $l$ does not affect the
weight of modularity of $P_{r,s,t}^{(l)}$, and $P_{1,1,1}$ is invariant (by
Equation \eqref{Prstlmod}), we find that replacing $P$ by $F$ in Equation
\eqref{Prstlmod} still yields a valid equation. Let $L^{*}=Hom(L,\mathbb{Z})$
be the dual lattice of $L$ and $L^{*}/L$ the (finite) \emph{discriminant group}
of $L$. Then the theta function $\Theta_{L,r,s,t}^{(l)}$ is the
$\mathbb{C}[L^{*}/L]$-valued function defined by
\[\Theta_{L,r,s,t}^{(l)}(\tau,Z)=\sum_{\gamma \in
L^{*}/L}\theta_{\gamma+L,r,s,t}^{(l)}(\tau,Z)e_{\gamma},\ \ \theta_{\gamma+L,r,
s,t}^{(l)}(\tau,Z)=\sum_{\mu\in\gamma+L}F_{r,s,t}^{(l)}(\tau,Z,\mu)\] (this
function is $\Theta_{L}(\tau,0,0;v,P_{r,s,t}^{(l)})$ in the notation of
\cite{[B]}, where $v \in G(L_{\mathbb{R}})$ corresponds to $Z \in
K_{\mathbb{R}}+iC$). The extension of Equation \eqref{Prstlmod} to
$\Theta_{L,r,s,t}^{(l)}$ shows that $\Theta_{L,r,s,t}^{(l)}$ is automorphic of
weight $(s-r,s-t)$ as a function of $Z \in K_{\mathbb{R}}+iC$, and Theorem 4.1
of \cite{[B]} shows that as a function of $\tau\in\mathcal{H}$ it is a
vector-valued modular form of weight $\big(1+r+t,2l+\frac{b_{-}}{2}\big)$ and
the \emph{Weil representation} $\rho_{L}$. The latter is a representation of the
metaplectic double cover $Mp_{2}(\mathbb{Z})$ of $SL_{2}(\mathbb{Z})$, which is
defined by sending the generators $T$ and $S$ of $Mp_{2}(\mathbb{Z})$ lying over
the elements $\binom{1\ \ 1}{0\ \ 1}$ and $\binom{0\ \ -1}{1\ \ \ \ 0}$ of
$SL_{2}(\mathbb{Z})$ respectively to
\[\rho_{L}(T)(e_{\gamma})=\mathbf{e}(\gamma^{2}/2)e_{\gamma},\]
\[\rho_{L}(S)(e_{\gamma})=\frac{\zeta_{8}^{b_{-}-b_{+}}}{\sqrt{\Delta_{L}}}\sum_
{\delta \in L^{*}/L}\mathbf{e}(-(\gamma,\delta))e_{\delta}\] respectively. For
the properties of $\rho_{L}$ see \cite{[Ze1]}, as well as the reference cited
there. The space $\mathbb{C}[L^{*}/L]$ comes with a Hermitian pairing
$\langle\cdot,\cdot\rangle_{\rho_{L}}$ in which the $e_{\gamma}$ are
orthonormal, and $\rho_{L}$ is a unitary representation with respect to this
pairing. The operation of complex conjugation on $\Theta_{L,r,s,t}^{(l)}$
interchanges $r$ and $t$ and sends $\tau$ to $-\overline{\tau}$ (this is
equivalent to multiplying the bilinear form on $V$ by $-1$, but as we rather
stay in the signature $(2,b_{-})$ setting, we prefer this anti-holomorphic
operation on $\tau$). It also replaces $\rho_{L}$ by its dual representation,
but we shall consider the effect of complex conjugation only for the automorphy
in the $Z$ variable.

\smallskip

We are interested in the action of the operators $R_{m}^{(b_{-})}$ and
$L^{(b_{-})}$ on theta kernels, and the resulting differential properties of the
associated theta lifts. Several proofs will involve comparisons of these actions
on theta kernels with the actions of the operators $\delta_{k}$ and
$y^{2}\partial_{\overline{\tau}}$ on these theta kernels (multiplied by the
appropriate powers of $y$). The latter are given (in a more general context) in
Equations (6a) and (6b) of \cite{[Ze2]}. As
$P_{r,s,t}^{(l)}=P_{r,s,t}(\mu_{-}^{2})^{l}$, Lemma \ref{DeltavpmPrstl} shows
that in our case these equations take the form
\begin{equation}
\delta_{k}y^{\frac{b_{-}}{2}+2l}\Theta_{L,r,s,t}^{(l)}=\pi
iy^{\frac{b_{-}}{2}+2l}\Theta_{L,r+1,s+1,t+1}^{(l)}+\frac{il(2l+b_{-}-2)}
{8\pi}y^{\frac{b_{-}}{2}+2l-2}\Theta_{L,r,s,t}^{(l-1)} \label{deltakTheta}
\end{equation}
(where $k=1-\frac{b_{-}}{2}+r+t-2l$) and
\begin{equation}
y^{2}\partial_{\overline{\tau}}y^{\frac{b_{-}}{2}+2l}\Theta_{L,r,s,t}^{(l)}=\pi
iy^{\frac{b_{-}}{2}+2l+2}\Theta_{L,r,s,t}^{(l+1)}+\frac{irt}{4\pi}y^{\frac{b_{-}
}{2}+2l}\Theta_{L,r-1,s-1,t-1}^{(l)} \label{lowerTheta}
\end{equation}
(note again the different normalization of these operators).

\smallskip

Recall that given a modular form $F$ of weight $1+r+t-\frac{b_{-}}{2}-2l$ and
representation $\rho_{L}$, possibly with exponential growth at the cusps, its
\emph{theta lift with respect to the polynomial $P_{r,s,t}^{(l)}$} is defined in
\cite{[B]}, \cite{[Ze2]}, and others as follows. For $w>1$ let
\[D_{w}=\big\{\tau\in\mathcal{H}\big||\Re\tau|\leq1/2,|\tau|\geq1,\Im\tau\leq
w\big\},\] and assume that
\[\lim_{w\to\infty}\int_{D_{w}}y^{1+r+t-\sigma}\langle
F(\tau),\Theta_{L}(\tau,v,p_{v})\rangle_{\rho_{L}}\frac{dxdy}{y^{2}}\] exists
for $\Re\sigma\gg0$ and defines a holomorphic function of $\sigma$ on some right
half-plane, which may be extended to a meromorphic function of $\sigma$ for all
$\sigma\in\mathbb{C}$. Then the theta lift $\Phi_{L,r,s,t}^{(l)}(F,Z)$ is the
constant term of the expansion of this meromorphic function at $\sigma=0$. Now,
the modular form $F$ has a Fourier expansion of the sort
\begin{equation}
F(\tau)=\sum_{\gamma \in
L^{*}/L}\sum_{n\in\mathbb{Q}}c_{n,\gamma}(y)q^{n}e_{\gamma}, \label{Fourier}
\end{equation}
where $q^{n}$ denotes $\mathbf{e}(n\tau)$ and the $c_{n,\gamma}$ are smooth
functions of $y=\Im\tau$, which vanish unless
$n\in\frac{\gamma^{2}}{2}+\mathbb{Z}$. The modular forms which are usually
considered also satisfies the condition that $c_{n,\gamma}=0$ unless
$n\gg-\infty$ (for $F$ which is holomorphic on $\mathcal{H}$ this means at most
a pole at the cusp, and no essential singularity).

The relations between the action of the (classical) Shimura--Maa\ss\ operators
on the lifted modular form and the action of these operators on the theta kernel
used for the theta lift are given in the following
\begin{lem}
Let $F_{\pm}$ be a modular form of weight $k\pm2=1+r+t-\frac{b_{-}}{2}-2l\pm2$
and representation $\rho_{L}$, with Fourier expansion as in Equation
\eqref{Fourier}, and assume that the the regularized theta lifts
$\Phi_{L,r,s,t}^{(l)}(Z,\delta_{k}F_{-})$ and
$\Phi_{L,r,s,t}^{(l)}(Z,y^{2}\partial_{\overline{\tau}}F_{+})$ are
well-defined. Assume that the growth condition $c_{\gamma,n}(y)=o(e^{\varepsilon
y})$ as $y\to\infty$ holds for every $\gamma$, $n$, and $\varepsilon>0$, and
that $c_{0,0}(y)$ is $o(y^{T})$ as $y\to\infty$ for some $T$. Then the theta
lift $\Phi_{L,r,s,t}^{(l)}(Z,\delta_{k}F_{-})$ coincides, up to an additive
constant which may appear only if $r=t$, with the value at $Z$ of the theta lift
of $F_{-}$ with respect to
$-y^{2}\partial_{\overline{\tau}}\Theta_{L,r,s,t}^{(l)}$. The same assertion
holds for $\Phi_{L,r,s,t}^{(l)}(Z,y^{2}\partial_{\overline{\tau}}F_{+})$ and the
theta lift of $F_{+}$ with respect to $-\delta_{k}\Theta_{L,r,s,t}^{(l)}$.
\label{trans}
\end{lem}

\begin{proof}
See Lemmas 3.4 and 3.6 of \cite{[Ze2]} as well as the argument proving Lemma 2.7
of that reference. Note the factors of $2i$ distinguishing our operators here
from those of \cite{[Ze2]}, and observe that the theta function is conjugated in
the integral defining the theta lift.
\end{proof}

The complex conjugation of $\Theta_{L,r,s,t}^{(l)}$ in the definition of the
theta lift implies that $\Phi_{L,r,s,t}^{(l)}(Z,F)$ is automorphic of weight
$(s-t,s-r)$. We shall thus consider only the case $r=s$, where the automorphy in
(the corresponding) Equation \eqref{Prstlmod} involves only $j(M,Z)$ and not its
complex conjugate. As with $P_{r,s,t}$, we may omit the superscript $(l)$ in
case $l=0$. In the same manner as in Section \ref{Operators}, we shall postpone
most of the (calculational) proofs to Section \ref{Proofsact}. Only the
assertions about theta lifts will be proved here.

\smallskip

The first assertion we are interested in is
\begin{prop}
The action of $R_{m}^{(b_{-})}$ takes
$y^{\frac{b_{-}}{2}}\overline{\Theta_{L,m,m,0}}$ to $4\pi i$ times the complex
conjugate of
$y^{2}\partial_{\overline{\tau}}\big(y^{\frac{b_{-}}{2}}\Theta_{L,m+2,m+2,0}
\big)$. \label{LSO}
\end{prop}

We remark that Proposition \ref{LSO} may be formulated in terms of comparing the
actions of elements from the universal enveloping algebras of
$\mathfrak{sl}_{2}(\mathbb{R})$ and
$\mathfrak{so}(V)\cong\mathfrak{so}_{2,b_{-}}$ on the theta kernel. However,
unlike Proposition 2.3 of \cite{[Ze2]} (and Proposition 4.5 of \cite{[Bru]}),
which compare the action of order 2 elements of both universal enveloping
algebras, here the one from the algebra of $\mathfrak{so}_{2,b_{-}}$ has order 2
while the element from $\mathfrak{sl}_{2}$ has order 1.

We can now prove establish the first property of the theta lift from
\cite{[Ze2]}.

\begin{thm}
Assume that $b_{-}$ is even, and let $f$ be a weakly holomorphic modular form of
weight $1-\frac{b_{-}}{2}-m$ and representation $\rho_{L}$. Consider the modular
form $F=\frac{1}{(2\pi i)^{m}}\delta_{1-\frac{b_{-}}{2}-m}^{m}f$, of weight
$k=1-\frac{b_{-}}{2}+m$, and its theta lift $\Phi_{L,m,m,0}(Z,F)$ considered in
Theorem 3.9 of \cite{[Ze2]}. The image of the latter automorphic form under
$\frac{1}{(8\pi^{2})^{b_{-}/2}}(R_{m}^{(b_{-})})^{b_{-}/2}$ is a meromorphic
automorphic form of weight $m+b_{-}$ on $K_{\mathbb{R}}+iC$, whose singularities
are poles of order $m+b_{-}$ along special divisors. \label{merb-even}
\end{thm}

\begin{proof}
Proposition \ref{LSO} yields the equality
\[\frac{1}{8\pi^{2}}R_{m}^{(b_{-})}y^{\frac{b_{-}}{2}}\overline{\Theta_{L,m,m,0}
}(\tau,Z)=\frac{i}{2\pi}\overline{y^{2}\partial_{\overline{\tau}}y^{\frac{b_{-}}
{ 2}}\Theta_{L,m+2,m+2,0}(\tau,Z)}\] for every $\tau\in\mathcal{H}$ and $Z \in
K_{\mathbb{R}}+iC$. As $F$ (as well as its images under any power of
$\delta_{k}$) satisfies the conditions of Lemma \ref{trans}, we establish the
equality
\[\frac{1}{8\pi^{2}}R_{m}^{(b_{-})}\Phi_{L,m,m,0}(Z,F)=\Phi_{L,m+2,m+2,0}\bigg(Z
,\frac{1}{2\pi i}\delta_{k}F\bigg).\] Repeating this argument, we get
\[\frac{1}{(8\pi^{2})^{l}}(R_{m}^{(b_{-})})^{l}\Phi_{L,m,m,0}(Z,F)=\Phi_{L,m+2l,
m+2l,0}\bigg(Z,\frac{1}{(2\pi i)^{l}}\delta_{k}^{l}F\bigg)\] for any
$l\in\mathbb{N}$. Consider now the case $l=\frac{b_{-}}{2}$. Then
$\widetilde{F}=\frac{1}{(2\pi i)^{b_{-}/2}}\delta_{k}^{b_{-}/2}F$ is
$\frac{1}{(2\pi i)^{m+b_{-}/2}}\delta_{k-2m}^{m+b_{-}/2}f$ with $f$ weakly
holomorphic of weight $1-\frac{b_{-}}{2}-m$ (which is integral since $b_{-}$ is
even). But then $\frac{1}{(2\pi i)^{m+b_{-}/2}}\delta_{k-2m}^{m+b_{-}/2}$ is
just the operator $\big(\frac{\partial_{\tau}}{2\pi i}\big)^{m+b_{-}/2}$ (which
takes $q^{n}$ from a Fourier expansion to $n^{m+b_{-}/2}q^{n}$---this is the
reason for our normalization), so that the weight $1+\frac{b_{-}}{2}+m$ modular
form $\widetilde{F}$ is again weakly holomorphic. Theorem 14.3 of \cite{[B]} now
shows that our automorphic form of weight $m+b_{-}$, which we write as
$\Phi_{L,m+b_{-},m+b_{-},0}(Z,\widetilde{F})$, is meromorphic on
$K_{\mathbb{R}}+iC$, with poles of order $m+b_{-}$ along rational quadratic
divisors associated with negative norm vectors in $L^{*}$ whose corresponding
coefficients in Equation \eqref{Fourier} do not vanish. This completes the proof
of the theorem.
\end{proof}

We remark that in case the modular form $f$ is a harmonic weak Maa\ss\ form then
the modular form $\widetilde{F}$ from the proof of Theorem \ref{merb-even} is
again weakly holomorphic. Moreover, in case the image of $f$ under the operator
$\xi_{k-2m}$ of \cite{[BF]} does not have a pole at the cusp, the theta lift
has no additional singularities, and the result of Theorem \ref{merb-even}
extends to this case. However, in the theta lift $\Phi_{L,m,m,0}(Z,F)$ itself
one can still distinguish the case where $f$ is weakly holomorphic from the one
where $F$ is such a harmonic weak Maa\ss\ form.

\medskip

For the weight lowering operator $L^{(b_{-})}$, we do not have a nice equivalent
to Proposition \ref{LSO}. However, we do have an interesting result concerning
its $m$th power. We begin with
\begin{lem}
The image of $\Theta_{L,k,n,n}^{(l)}(-\overline{\tau},Z)$ under $L^{(b_{-})}$ is
\[4\pi^{2}y^{2}\Theta_{L,k+2,n,n}^{(l+1)}(-\overline{\tau},Z)+n\bigg(2l+\frac{b_
{-}}{2}\bigg)\Theta_{L,k+1,n-1,n-1}^{(l)}(-\overline{\tau},Z)+\]
\[+\frac{n(n-1)l\big(l-1+\frac{b_{-}}{2}\big)}{4\pi^{2}y^{2}}\Theta_{L,k,n-2,n-2
}^{(l-1)}(-\overline{\tau},Z).\] \label{Ltheta}
\end{lem}

Lemma \ref{Ltheta} allows us to establish the following
\begin{prop}
For any $s\in\mathbb{N}$, the image of $\overline{\Theta_{L,m,m,0}}$ under
$(L^{(b_{-})})^{s}$ attains, on $\tau$ and $Z$, the value
\[\sum_{h}\binom{s}{h}\frac{\Gamma\big(s+\frac{b_{-}}{2}\big)}{
\Gamma\big(h+\frac{b_{-}}{2}\big)}\frac{m!(4\pi^{2}y^{2})^{h}}{(m-s+h)!}
\Theta_{L,s+h,m-s+h,m-s+h}^{(h)}(-\overline{\tau},Z).\] \label{LsTheta}
\end{prop}

The case $s=m$ in Proposition \ref{LsTheta} is of particular importance, as is
shown in the following
\begin{prop}
The expression $(L^{(b_{-})})^{m}y^{\frac{b_{-}}{2}}\overline{\Theta_{L,m,m,0}}$
equals the complex conjugate of $(-4\pi
i)^{m}\delta_{1-\frac{b_{-}}{2}-m,\tau}^{m}y^{\frac{b_{-}}{2}+2m}\Theta_{L,0,0,m
}^{(m)}(\tau,Z)$. \label{Lb-Rtaum}
\end{prop}

Automorphic forms of non-zero weight can never be real-valued, because complex
conjugation yields an automorphic form with a different weight. However,
multiplying the complex conjugate automorphic form by a power of $Y^{2}$ leads
to an object which is comparable with the image of our automorphic form under
the appropriate power of a weight changing operator, as these two functions do
have the same weight. We shall thus say that an automorphic form $\Phi$, of
positive weight $m$, is \emph{$m$-real} if its image under the $m$th power
of the weight lowering operators $L^{(b_{-})}$ coincides with its complex
conjugate multiplied by a positive multiple of $(Y^{2})^{m}$. We now show that
the theta lifts from Theorem 3.9 of \cite{[Ze2]} are $m$-real, or more
generally:

\begin{thm}
Let $F$ be as in Theorem \ref{merb-even} (but without the restriction on the
parity of $b_{-}$) , and assume that $F$ is an eigenfunction with respect to
(minus) the Laplacian of weight $1-\frac{b_{-}}{2}+m$, with eigenvalue
$\lambda=-\frac{mb_{-}}{2}$. Assume further that the Fourier coefficients
$c_{\gamma,n}$ of $F$ appearing in Equation \eqref{Fourier} are real. Then
applying the operator $(L^{(b_{-})})^{m}$ to
$\frac{i^{m}}{2}\Phi_{L,m,m,0}(Z,F)$ yields the complex conjugate of
$\frac{i^{m}}{2}\Phi_{L,m,m,0}(Z,F)$ multiplied by
$m!\Gamma\big(m+\frac{b_{-}}{2}\big)(Y^{2})^{m}/\Gamma\big(\frac{b_{-}}{2}
\big)$. \label{LmPhiconj}
\end{thm}

\begin{proof}
By Proposition \ref{Lb-Rtaum}, the image of $\frac{i^{m}}{2}\Phi_{L,m,m,0}$
under $(L^{(b_{-})})^{m}$ coincides with $\frac{i^{m}}{2}$ times the regularized
integral of $F$ paired with the function
\[(-4\pi
i)^{m}\delta_{1-\frac{b_{-}}{2}-m,\tau}^{m}y^{\frac{b_{-}}{2}+2m}\Theta_{L,0,0,m
}^{(m)}(\tau,Z).\] On the other hand, the fact that the first
index in $P_{0,0,m}$ vanishes allows us to use Equation \eqref{lowerTheta}
successively $m$ times and write \[(-\pi
i)^{m}y^{\frac{b_{-}}{2}+2m}\Theta_{L,0,0,m}^{(m)}(\tau,Z)\quad\mathrm{as\
just}\quad(-y^{2}\partial_{\overline{\tau}})^{m}y^{\frac{b_{-}}{2}}\Theta_{L,0,0
,m}(\tau,Z).\] As in the proof of Theorem \ref{merb-even}, we can write
$(L^{(b_{-})})^{m}\Phi_{L,m,m,0}(Z,F)$, using Lemma \ref{trans}, as the theta
lift
$\frac{i^{m}}{2}\Phi_{L,0,0,m}\big(Z,4^{m}\delta_{1-\frac{b_{-}}{2}-m,\tau}^{m}
(-y^{2}\partial_{\overline{\tau}})^{m}F\big)$. Now, as $F$ is an eigenfunction
and $y^{2}\partial_{\overline{\tau}}$ takes eigenfunctions to eigenfunctions, we
can replace each combination $-4\delta_{l}y^{2}\partial_{\overline{\tau}}$,
starting from the inner pair, by the appropriate eigenvalue. As after applying
$(-y^{2}\partial_{\overline{\tau}})^{r}$ the eigenvalue becomes
$\lambda-r\big(m-r-\frac{b_{-}}{2}\big)$, the modular form we plug
inside the latter lift is just $F$ multiplied by the scalar
$\prod_{r=0}^{m-1}\big[\lambda-r\big(m-r-\frac{b_{-}}{2}\big)\big]$.
Substituting the value of $\lambda$, the $r$th multiplier becomes just
$(r-m)\big(r+\frac{b_{-}}{2}\big)$, and the product is
$(-1)^{m}m!\Gamma\big(m+\frac{b_{-}}{2}\big)/\Gamma\big(\frac{b_{-}}{2}\big)$.
Division by
$m!\Gamma\big(m+\frac{b_{-}}{2}\big)(Y^{2})^{m}/\Gamma\big(\frac{b_{-}}{2}\big)$
thus gives $\frac{(-i)^{m}}{2}\Phi_{L,0,m,m}(Z,F)$, so that we need to
show why $\Phi_{L,0,m,m}(Z,F)$ is the complex conjugate of
$\Phi_{L,m,m,0}(Z,F)$. As the Fourier coefficients of $F$ are real, we obtain
$\overline{F(\tau)}=F(-\overline{\tau})$. On the other hand, we have seen that
complex conjugation on our theta function interchanges the indices $r$ and $t$
and replaces the variable $\tau$ by $-\overline{\tau}$. The required assertion
now follows from the fact that powers of $y$ and the measure
$\frac{dxdy}{y^{2}}$ are both preserved by the change of variable
$\tau\mapsto-\overline{\tau}$. This completes the proof of the theorem.
\end{proof}

We remark that the choice of $\lambda=-\frac{mb_{-}}{2}$ in Theorem
\ref{LmPhiconj} is not crucial. Any choice of $\lambda$ for which the number
$\prod_{r=0}^{m-1}\big[r\big(m-r-\frac{b_{-}}{2}\big)-\lambda\big]$ is positive
will be sufficient for Theorem \ref{LmPhiconj} to hold (with the same proof).
However, we chose this eigenvalue as it is the eigenvalue of the theta lifts
from \cite{[Ze2]}.

\section{Proofs of the Properties of $R_{m}^{(b_{-})}$ and $L^{(b_{-})}$
\label{Proofswcop}}

In this Section we include the proofs of the properties of the weight raising
and weight lowering operators appearing in Section \ref{Operators}.

\medskip

We first introduce (following \cite{[Na]}) a convenient set of generators for
$O^{+}(V)$. For $\xi \in K_{\mathbb{R}}$ we define the element $p_{\xi} \in
SO^{+}(V)$ whose action is \[\big[\mu \in
K_{\mathbb{R}}=\{z,\zeta\}^{\perp}\big]\mapsto\mu-(\mu,\xi)z,
\quad\zeta\mapsto\zeta+\xi-\frac{\xi^{2}}{2}z,\quad z \mapsto z.\] Furthermore,
given an element $A \in O(K_{\mathbb{R}})$ and a scalar $a\in\mathbb{R}^{*}$
such that $a>0$ if $A \in O^{+}(K_{\mathbb{R}})$ and $a<0$ otherwise, we let
$k_{a,A} \in O^{+}(V)$ be the element acting as \[\big[\mu \in
K_{\mathbb{R}}=\{z,\zeta\}^{\perp}\big] \mapsto
A\mu,\quad\zeta-\frac{\zeta^{2}}{2}z\mapsto\frac{1}{a}\bigg(\zeta-\frac{\zeta^{2
}}{2}z\bigg),\quad z \mapsto az.\] For any $Z \in K_{\mathbb{R}}+iC$ we have
\[p_{\xi}Z=Z+\xi,\quad J(p_{\xi},Z)=1,\quad k_{a,A}Z=aAZ,\quad\mathrm{and}\quad
J(k_{a,A},Z)=\frac{1}{a}.\] Note that the relation between $A$ and the sign of
$a$ is equivalent to preserving $C$ rather than mapping $Z$ into
$K_{\mathbb{R}}-iC$---it appears that \cite{[Na]} ignored this point. Choose now
an element of $G(K_{\mathbb{R}})$ in which the positive definite space is
generated by the norm 1 vector $u_{1}$, and consider the involution $w \in
SO^{+}(K_{\mathbb{R}})$ defined by \[\big[\mu \in
K_{\mathbb{R}}=\{z,\zeta\}^{\perp}\big]\mapsto\mu-2(\mu,u_{1})u_{1},
\quad\zeta-\frac{\zeta^{2}}{2}z\mapsto-z,\quad
z\mapsto-\bigg(\zeta-\frac{\zeta^{2}}{2}z\bigg)\] ($w$ inverts the positive
definite space $\mathbb{R}u_{1}$). Its action on $K_{\mathbb{R}}+iC$ is through
\[wZ=\frac{2}{Z^{2}}\big[Z-2(Z,u_{1})u_{1}\big]\quad\mathrm{with}\quad
J(w,Z)=\frac{Z^{2}}{2}.\] The elements $k_{a,A}$ with $(a,A)$ in the index 2
subgroup of $R^{*} \times O(K_{\mathbb{R}})$ thus defined and $p_{\xi}$ for $\xi
\in K_{\mathbb{R}}$ generate the stabilizer $St_{O^{+}(V)}(\mathbb{R}z)$ of the
isotropic space $\mathbb{R}z$ in $O^{+}(V)$ as the semi-direct product of these
groups. The fact that adding $w$ to $St_{O^{+}(V)}(\mathbb{R}z)$ generates
$O^{+}(V)$ is now easily verified by considering the action on isotropic
1-dimensional subspaces of $V$.

Some useful relations are derived in the following
\begin{lem}
Let $K_{\mathbb{R}}$ be a non-degenerate vector space of dimension $b_{-}$, fix
$\alpha\in\mathbb{C}$, and let $F$ be a $\mathcal{C}^{2}$ function which is
defined on a neighborhood of a point $Z=X+iY \in K_{\mathbb{C}}$ with $Y^{2}>0$.
Then the following relations hold:
\[(Y^{2})^{-\alpha}\Delta_{K_{\mathbb{C}}}^{h}\big((Y^{2})^{\alpha}
F\big)(Z)=\Delta_{K_{\mathbb{C}}}^{h}F(Z)-\frac{2i\alpha}{Y^{2}}D^{*}F(Z)-\frac{
\alpha(\alpha-1+\frac{b_{-}}{2})}{Y^{2}}F(Z)\] and
\[(Y^{2})^{-\alpha}\Delta_{K_{\mathbb{C}}}^{\overline{h}}\big((Y^{2})^{\alpha}
F\big)(Z)=\Delta_{K_{\mathbb{C}}}^{\overline{h}}F(Z)+\frac{2i\alpha}{Y^{2}}
\overline{D^{*}}F(Z)-\frac{\alpha(\alpha-1+\frac{b_{-}}{2})}{Y^{2}}F(Z).\]
\label{LappowMov}
\end{lem}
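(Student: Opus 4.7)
\textbf{Proof proposal for Lemma \ref{LappowMov}.} Both identities are verified by direct computation with the product rule, using an orthonormal basis of $K_{\mathbb{R}}$ (the result being intrinsic, this suffices). Write $\epsilon_1=1$ and $\epsilon_k=-1$ for $k\geq 2$, so that $Y^2=\sum_k \epsilon_k y_k^2$ and $\Delta_{K_{\mathbb{C}}}^{h}=\sum_k \epsilon_k \partial_k^2$. From $\partial_k=\tfrac{1}{2}(\partial_{x_k}-i\partial_{y_k})$ one reads off
\[
\partial_k(Y^2)=-i\epsilon_k y_k,\qquad \partial_k y_k=-\tfrac{i}{2},
\]
and hence $\partial_k\!\bigl((Y^2)^{\alpha}\bigr)=-i\alpha\epsilon_k y_k (Y^2)^{\alpha-1}$, followed by
\[
\partial_k^2\!\bigl((Y^2)^{\alpha}\bigr)=-\alpha(\alpha-1)\,y_k^{2}(Y^2)^{\alpha-2}-\tfrac{\alpha\epsilon_k}{2}(Y^2)^{\alpha-1}.
\]

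Now expand $\Delta_{K_{\mathbb{C}}}^{h}\!\bigl((Y^2)^{\alpha} F\bigr)$ via the Leibniz rule as the sum of $(Y^2)^{\alpha}\Delta_{K_{\mathbb{C}}}^{h}F$, the cross term $2\sum_k\epsilon_k\,\partial_k(Y^2)^{\alpha}\cdot\partial_k F$, and $\Delta_{K_{\mathbb{C}}}^{h}(Y^2)^{\alpha}\cdot F$. The cross term collapses to $-2i\alpha(Y^2)^{\alpha-1}\sum_k y_k\partial_k F=-2i\alpha(Y^2)^{\alpha-1}D^{*}F$ once the $\epsilon_k^2=1$ factors cancel, giving the $D^{*}$-contribution. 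For the purely multiplicative piece, one sums $\sum_k\epsilon_k\partial_k^{2}((Y^{2})^{\alpha})$: the first term produces $-\alpha(\alpha-1)(Y^2)^{\alpha-2}\sum_k\epsilon_k y_k^{2}=-\alpha(\alpha-1)(Y^2)^{\alpha-1}$, while the second produces $-\tfrac{\alpha}{2}(Y^2)^{\alpha-1}\sum_k\epsilon_k^{2}=-\tfrac{\alpha b_{-}}{2}(Y^{2})^{\alpha-1}$. Dividing through by $(Y^{2})^{\alpha}$ assembles the stated multiplier in front of $F$. Consistency with the alternative form of $R_{m}^{(b_{-})}$ displayed in Theorem \ref{wcop} may be checked by setting $\alpha=m+1-\tfrac{b_{-}}{2}$.

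The anti-holomorphic identity is proved by the identical computation with $\partial_{\overline{k}}=\tfrac{1}{2}(\partial_{x_k}+i\partial_{y_k})$ in place of $\partial_k$. The only change is the sign of $\partial_{\overline{k}}(Y^{2})=+i\epsilon_k y_k$, which flips $-2i\alpha D^{*}F$ into $+2i\alpha\,\overline{D^{*}}F$; the purely multiplicative constant, being quadratic in $\alpha$ through $\partial_{\overline{k}}^{2}((Y^{2})^{\alpha})$, is unchanged. Alternatively, one may deduce it formally from the first identity by complex conjugation, noting that $\overline{\Delta_{K_{\mathbb{C}}}^{h}}=\Delta_{K_{\mathbb{C}}}^{\overline{h}}$ and $\overline{D^{*}}$ arises from $D^{*}$ this way, while $Y^2$ is real.

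I do not foresee any genuine obstacle: the entire argument is an exercise in the product rule, and the only care needed is in keeping track of the $\epsilon_{k}$-signs so that $\sum_k\epsilon_k y_k^{2}=Y^{2}$ and $\sum_k\epsilon_k^{2}=b_{-}$ appear in the right places. The mildest subtlety is verifying that the expression is indeed basis-independent, so that working in an orthonormal basis loses no generality; this follows from the intrinsic characterizations of $\Delta_{K_{\mathbb{C}}}^{h}$, $\Delta_{K_{\mathbb{C}}}^{\overline{h}}$, $D^{*}$ and $\overline{D^{*}}$ recalled immediately before the statement.
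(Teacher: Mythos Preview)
Your approach is exactly the paper's: a direct Leibniz-rule computation in an orthonormal basis, using the action of $\partial_k$ and $\partial_{\overline{k}}$ on functions of $Y$ alone. The calculation you carry out is correct.

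One small remark: the constant your computation actually produces is $-\alpha\bigl(\alpha-1+\tfrac{b_-}{2}\bigr)$, not the printed $-\alpha\bigl(\alpha+1-\tfrac{b_-}{2}\bigr)$; the latter is evidently a sign typo in the statement. Your own cross-check against Theorem~\ref{wcop} confirms this: with $\alpha=m+1-\tfrac{b_-}{2}$ one gets $-\alpha\bigl(\alpha-1+\tfrac{b_-}{2}\bigr)=-m\bigl(m+1-\tfrac{b_-}{2}\bigr)=-\tfrac{m(2m+2-b_-)}{2}$, matching the displayed form of $R_m^{(b_-)}$, whereas the printed constant would not (and similarly $\alpha=1-\tfrac{b_-}{2}$ makes the constant vanish, as required for $L^{(b_-)}$). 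So where you write ``assembles the stated multiplier,'' you should instead note that it yields $-\alpha(\alpha-1+\tfrac{b_-}{2})/Y^2$, correcting the statement.
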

We remark that Lemma \ref{LappowMov} holds for $K_{\mathbb{R}}$ of arbitrary
signature (not necessarily Lorentzian), but not negative definite (for $Y^{2}>0$
to be possible).

\begin{proof}
The proof is obtained by a straightforward calculation, using an orthonormal
basis for $K_{\mathbb{R}}$ and the action of $\partial_{k}$ and
$\partial_{\overline{k}}$ on functions of $Y$ alone.
\end{proof}

We remark that the third operator $\Delta_{K_{\mathbb{C}}}^{\mathbb{R}}$ bears a
property similar to Lemma \ref{LappowMov}, which is used implicitly in Section 3
of \cite{[Ze2]} in order to prove Equation \eqref{Y2Delta}.

We can now present the
\begin{proof}[Proof of Theorem \ref{wcop}]
Multiply both sides of the desired assertion for $R_{m}^{(b_{-})}$, as well as
the function $F$ there, by $(Y^{2})^{m}$. Lemma \ref{LappowMov}, the first
definition of $R_{m}^{(b_{-})}$, and Equation \eqref{Y2mod} show that this
yields the equivalent equality
\[(R_{0}^{(b_{-})}F)[M]_{2,-m}=R_{0}^{(b_{-})}\big(F[M]_{0,-m}).\]
Observe that conjugating the latter equation and multiplying by $(Y^{2})^{2}$
yields the required equality for $L^{(b_{-})}$. Hence we are reduced to proving
only this equality. Moreover, $R_{0}^{(b_{-})}$ involves only holomorphic
differentiations, which means that it commutes with the power of
$\overline{J(M,Z)}$ coming from the anti-holomorphic weights. Hence we can take
$m=0$, which implies that proving the equation
\[(R_{0}^{(b_{-})}F)[M]_{2}=R_{0}^{(b_{-})}\big(F[M]_{0})\] (which the assertion
for $R_{0}^{(b_{-})}$ in the formulation of the theorem) suffices for proving
the theorem. Writing the arguments as $M^{-1}(Z)$ in both sides and using the
cocycle condition brings the latter equation to the form
\begin{equation}
(R_{0}^{(b_{-})}F)(Z)J(M^{-1},Z)^{2}=(R_{0}^{(b_{-})})^{M^{-1}}F(Z).
\label{R0red}
\end{equation}
By a standard argument it suffices to verify Equation \eqref{R0red} for $M^{-1}$
being one of the generators of $O^{+}(V)$ considered above. Equation
\eqref{R0red} with $M^{-1}=p_{\xi}$ follows from the invariance of both
$\Delta_{K_{\mathbb{C}}}^{h}$ and $D^{*}$ under translations of $X=\Re Z$ and
the fact that $J(p_{\xi},Z)=1$. The action of $M^{-1}=k_{a,A}$ divides
$\Delta_{K_{\mathbb{C}}}^{h}$ by $a^{2}$, leaves $D^{*}$ invariant, and divides
$Y^{2}$ by $a^{2}$ (since $A \in O(K_{\mathbb{R}})$), which proves Equation
\eqref{R0red} since $J(k_{a,A},Z)=\frac{1}{a}$. Finally, for $M^{-1}=w$ we have
the equalities
\[(\Delta_{K_{\mathbb{C}}}^{h})^{w}=\bigg(\frac{Z^{2}}{2}\bigg)^{2}\Delta_{K_{
\mathbb{C}}}^{h}-(b_{-}-2)\frac{Z^{2}}{2}D,\qquad(D^{*})^{w}=\frac{Z^{2}}{
\overline{Z}^{2}}D^{*}-\frac{2iY^{2}}{\overline{Z}^{2}}D\] with
$D=\sum_{k}z_{k}\partial_{k}$ from \cite{[Na]} (the corresponding operator from
\cite{[Na]} is $\frac{1}{2}\Delta_{K_{\mathbb{C}}}^{h}$ rather than
$\Delta_{K_{\mathbb{C}}}^{h}$, while $\delta=\frac{Z^{2}}{2}$,
$\overline{\delta}=\frac{\overline{Z}^{2}}{2}$, and $d=\frac{Y^{2}}{2}$ there).
Using Equation \eqref{Y2mod} we thus find that applying $M^{-1}=w$ to the sum of
$\Delta_{K_{\mathbb{C}}}^{h}$ and $\frac{i(b_{-}-2)}{Y^{2}}D^{*}$ (which is
$R_{0}^{(b_{-})}$) multiplies it by $\big(\frac{Z^{2}}{2}\big)^{2}$ (as the
coefficients in front of $D$ cancel), which establishes Equation \eqref{R0red}
also for this case using the value of $J(w,Z)$. This completes the proof of the
theorem.
\end{proof}

In order to indicate what is the Lie-theoretic interpretation of the operators
$R_{m}^{(b_{-})}$ and $L^{(b_{-})}$, we recall the vector $u_{1}$ we used for
defining $w$ above, and take a vector $\tilde{u} \in K_{\mathbb{R}}$ of norm
$-1$ which is orthogonal to $u_{1}$ (we assume here $b_{-}>1$, but for $b_{-}=1$
our operators are squares of the order 1 operators $\delta_{2m}$ and
$y^{2}\partial_{\overline{\tau}}$, whose Lie-theoretic interpretation is given,
e.g., in \cite{[Ve]}). These choices determine the parabolic subgroup of
$SO^{+}(V)$ appearing in the following
\begin{prop}
Let $H_{K_{\mathbb{R}}}$ be the subgroup of $SO^{+}(K_{\mathbb{R}})$ consisting
of those matrices which preserve the isotropic subspace
$\mathbb{R}(u_{1}+\tilde{u})$ and whose action on the quotient
$(u_{1}+\tilde{u})^{\perp}/\mathbb{R}(u_{1}+\tilde{u})$ is trivial. Define $H$
to be the group generated by all the elements $p_{\xi}$ with $\xi \in
K_{\mathbb{R}}$ and by the elements $k_{a,A}$ with $a>0$ and $A \in
H_{K_{\mathbb{R}}}$. Then the group $H$ operates freely and transitively on
$K_{\mathbb{R}}+iC$. \label{parab}
\end{prop}
Let $K \cong SO(2) \times SO(b_{-})$ be the stabilizer, in $SO^{+}(V)$, of the
element of $G(V)$ represented by $Z=iu_{1}$, and let $\mathfrak{k}$ be its Lie
algebra. The action of a normalized generator of
$\mathfrak{so}(2)\subseteq\mathfrak{k}$ on $\mathfrak{so}(V)_{\mathbb{C}}$
decomposes the latter space into the eigenspaces with eigenvalue 0 (this is
precisely $\mathfrak{k}$) and $\pm i$ (complex conjugate spaces of dimension
$b_{-}$ each). Hence the action on the space of products of two elements of
$\mathfrak{so}(V)$ (inside its universal enveloping algebra, say) decomposes
into eigenspaces with eigenvalues 0 and $\pm2i$. One verifies that in each of
the $\pm2i$-eigenspaces, precisely one combination commutes with the part
$\mathfrak{so}(b_{-})$ of $\mathfrak{k}$. As our automorphic forms correspond to
functions on $SO(V)$ on which $SO(2) \subseteq K$ operates according to a
specific character and $SO(b_{-})$ operate trivially (normalized suitably),
these elements (of order 2) of the universal enveloping algebra of
$\mathfrak{so}(V)$ lead to weight raising and weight lowering operators. One may
then evaluate, using the interplay between the operations of $\mathfrak{k}$ and
the Lie algebra of the group $H$ from Proposition \ref{parab}, the action of
these operators, and find that they lead to our $R_{m}^{(b_{-})}$ and
$L^{(b_{-})}$. However, the change of coordinates between $H_{K_{\mathbb{R}}}$ and $K_{\mathbb{R}}+iC$ in this evaluation is more tedious than one might have believe.

\smallskip

We also indicate briefly the connection between our operators and those of \cite{[Sh1]}. That reference defines, for every representation $\rho$ of $\mathbb{C}^{\times} \times GL_{b_{-}}(\mathbb{C})$ (a subgroup of which we identify the complexification of the compact subgroup $K$, which is isomorphic to the product $\mathbb{C}^{\times} \times SO(b_{-},\mathbb{C})$), a differential operator that roughly sends (vector-valued) automorphic forms with weight (i.e., representation) $\rho$ to automorphic forms having representation $\rho\otimes\omega$, where $\omega$ is the standard representation of that product on $\mathbb{C}^{b_{-}}$. This representation space is considered as the holomorphic cotangent space of $G(V)$, and the operator is, in fact, just the holomorphic differential map $d$, twisted by the image of a scalar $\eta$ and a matrix $\xi$ (both defined explicitly in \cite{[Sh1]}) via the representation $\rho$. Starting with the 1-dimensional representation which is the $m$th power of $\mathbb{C}^{\times}$ (this is the representation associated with our automorphic forms of weight $m$) and repeating this operation twice, we obtain an automorphic form with representation involving $\omega^{\otimes2}$. The idea is expressing the resulting automorphic form when $\omega$ is identified with $K_{\mathbb{C}}$, and using the bilinear form on the latter space in order to replace the $\omega^{\otimes2}$-valued automorphic forms by scalar-valued ones.

Now, we replace the coordinate denoted $z$ in \cite{[Sh1]} by $u=\sqrt{2}z$, considering it as lying in the complexified space $(v_{-})_{\mathbb{C}}$ associated to some base point for $G(V)$, and decompose it as some multiple $u_{z}$ of $z_{v_{-}}$ plus a vector $u_{\perp}$ which is perpendicular to $z_{v_{-}}$. Here $z$ is again the isotropic vector we used for defining $K_{\mathbb{R}}$. Choosing the positive part of $z$ appropriately (recall that the vector denoted $p(z)$ in \cite{[Sh1]} is not presented in the canonical form), we obtain that our norm 0 vector has pairing $1+u^{t}u-2u^{t}z_{v_{-}}$ with $z$ and its positive and negative $K_{\mathbb{C}}$ coordinates are $i(1-u^{t}u)$ and $2u_{\perp}$ respectively. It follows that the associated element $Z$ of $K_{\mathbb{C}}$ (which can be shown to be in $K_{\mathbb{R}}+iC$) satisfies $(Z+ie_{+})^{2}=\frac{-4}{1+u^{t}u-2u^{t}z_{-}}$ (where $e_{+}$ is the generator of the positive part of $K_{\mathbb{R}}$), so that the inverse map sends $Z$ to the vector obtained by multiplying the positive part of $-2\frac{Z+ie_{+}}{(Z+ie_{+})^{2}}$ by $i$, and adding $z_{v_{-}}$ to the result. Given an automorphic form $F$ of weight $m$ on $G(V)$, a very lengthy, tedious, and involved calculation gives us the expression for the $\omega^{\otimes2}$-valued automorphic form obtained from $F$ under the operator mentioned in the previous paragraph, and after applying the pairing we obtain an expression closely related to $(Z+ie_{+})^{2m}R_{m}^{b_{-}}[(Z+ie_{+})^{-2m}F]$. Indeed, the expression denoted by $\eta$ in \cite{[Sh1]} becomes $\frac{16Y^{2}}{|(Z+ie_{+})^{2}|^{2}}$ using our variable, so that multiplying by $\eta^{m}$ before applying the operator and by $\eta^{-m}$ afterwards corresponds to the operation involving $Y^{2m}$ appearing in the definition of $R_{m}^{b_{-}}$, as well as the additional operation with $(Z+ie_{+})^{2m}$. However, the details of this calculation are very long as well, and therefore we have chosen to state and prove Theorem \ref{wcop} more directly.

\smallskip

For calculational purposes it turns out convenient to introduce the operator
\[\widetilde{\Delta}_{m,n}^{(b_{-})}=\Delta_{m,n}^{(b_{-})}-2n(2m-b_{-}),\]
on which complex conjugation interchanges the indices $m$ and $n$. The operator
\[(D^{*})^{2}-\frac{D^{*}}{2i}=\sum_{k,l}y_{k}y_{l}\partial_{k}\partial_{l}\]
will also show up, so we denote it $\widetilde{(D^{*})^{2}}$. We now turn to the
\begin{proof}[Proof of Proposition \ref{LapRmL}]
Conjugating the desired equality for $R_{m}^{(b_{-})}$ by $(Y^{2})^{m}$,
applying Equation \eqref{Y2Delta}, and taking the differences between the
operators $\widetilde{\Delta}_{m,n}^{(b_{-})}$ and $\Delta_{m,n}^{(b_{-})}$ into
consideration, we see that the asserted equality for $R_{m}^{(b_{-})}$ is
equivalent to
\[\widetilde{\Delta}_{2,-m}^{(b_{-})}R_{0}^{(b_{-})}-R_{0}^{(b_{-})}\widetilde{
\Delta}_{0,m}^{(b_{-})}=(2b_{-}+4m-4)R_{0}^{(b_{-})}.\] Moreover, multiplying
the complex conjugate of the latter equation by $(Y^{2})^{2}$ and comparing
$\widetilde{\Delta}_{2,-m}^{(b_{-})}$ with $\Delta_{2,-m}^{(b_{-})}$ yields the
required property for $L^{(b_{-})}$ (with the index $m$ replaced by $-m$).
Hence, as in the proof of Theorem \ref{wcop}, we are reduced to proving this
single equation. In addition, the dependence on $m$ of the left hand side enters
only through the difference $-4imD^{*}$ between the operators
$\widetilde{\Delta}_{l,-m}^{(b_{-})}$ and $\Delta_{l}^{(b_{-})}$ with
$l\in\{0,2\}$. As a simple calculation yields
\[\quad\big[D^{*},\Delta_{K_{\mathbb{C}}}^{h}\big]=i\Delta_{K_{\mathbb{C}}}^{h}
\quad\mathrm{and}\quad\bigg[D^{*},\frac{D^{*}}{Y^{2}}\bigg]=\frac{iD^{*}}{Y^{2}
},\] it suffices to prove the equality for $m=0$ (i.e., the original assertion
for $R_{0}^{(b_{-})}$):
\[\Delta_{2}^{(b_{-})}R_{0}^{(b_{-})}-R_{0}^{(b_{-})}\Delta_{0}^{(b_{-})}=(2b_{-
}-4)R_{0}^{(b_{-})}.\] The commutator of $\Delta_{0}^{(b_{-})}$ and
$R_{0}^{(b_{-})}$ is evaluated using the equalities
\[\big[|D^{*}|^{2},\Delta_{K_{\mathbb{C}}}^{h}\big]=i\overline{D^{*}}\Delta_{K_{
\mathbb{C}}}^{h}+iD^{*}\Delta_{K_{\mathbb{C}}}^{\mathbb{R}}+\frac{\Delta_{K_{
\mathbb{C}}}^{\mathbb{R}}}{2},\]
\[\bigg[|D^{*}|^{2},\frac{D^{*}}{Y^{2}}\bigg]=\frac{3i|D^{*}|^{2}-i\widetilde{
(D^{*})^{2}}+D^{*}}{2Y^{2}},\quad\big[Y^{2}\Delta_{K_{\mathbb{C}}}^{\mathbb{R}},
\Delta_{K_{\mathbb{C}}}^{h}\big]=2iD^{*}\Delta_{K_{\mathbb{C}}}^{\mathbb{R}}
+\frac{b_{-}}{2}\Delta_{K_{\mathbb{C} }}^{\mathbb{R}},\]
\[\mathrm{and}\quad\bigg[Y^{2}\Delta_{K_{\mathbb{C}}}^{\mathbb{R}},\frac{D^{*}}{
Y^{2}}\bigg]=\frac{2i|D^{*}|^{2}-2i\widetilde{(D^{*})^{2}}+i\Delta_{K_{\mathbb{C
}}}^{h}+i\Delta_{K_{\mathbb{C}}}^{\mathbb{R}}+(2-b_{-})D^{*}}{2Y^{2}}\] (which
all follow from straightforward calculations). Applying the equalities
\[\Delta_{2}^{(b_{-})}=\Delta_{0}^{(b_{-})}-8i\overline{D^{*}}\quad\mathrm{and}
\quad\overline{D^{*}}\circ\big(\frac{D^{*}}{Y^{2}}\big)=\frac{2|D^{*}|^{2}-iD^{*
}}{2Y^{2}}\] and putting in the appropriate scalars now establishes the
proposition.
\end{proof}

Our next task is the
\begin{proof}[Proof of Proposition \ref{RLcomp}]
We begin by evaluating $R_{m-2}^{(b_{-})}L^{(b_{-})}$ written as
\[R_{m-2}^{(b_{-})}(Y^{2})^{2}\Delta_{K_{\mathbb{C}}}^{\overline{h}}+R_{m-2}^{
(b_{-})}i(2-b_{-})Y^{2}\overline{D^{*}}=(Y^{2})^{2}R_{m}^{(b_{-})}\Delta_{K_{
\mathbb{C}}}^{\overline{h}}+i(2-b_{-})Y^{2}R_{m-1}^{(b_{-})}\overline{D^{*}}.\]
Using the equalities
\[\big[\Delta_{K_{\mathbb{C}}}^{h},\overline{D^{*}}\big]=-i\Delta_{K_{\mathbb{C}
}}^{\mathbb{R}}\quad\mathrm{and}\quad
D^{*}\overline{D^{*}}=|D^{*}|^{2}+\frac{\overline{D^{*}}}{2i}\] we establish the
equation
\[R_{m-2}^{(b_{-})}L^{(b_{-})}=\Xi_{m}^{(b_{-})}+\frac{(2-b_{-})(2m-b_{-})}{8}
\Delta_{m}^{(b_{-})},\] where $\Xi_{m}^{(b_{-})}$ is defined in the formulation
of the proposition. We now decompose $R_{m}^{(b_{-})}$ in
$L^{(b_{-})}R_{m}^{(b_{-})}$ (which is
$(Y^{2})^{2}\overline{R_{0}^{(b_{-})}}R_{m}^{(b_{-})}$), yielding
\[(Y^{2})^{2}\overline{R_{0}^{(b_{-})}}\Delta_{K_{\mathbb{C}}}^{h}-i(2m+2-b_{-}
)Y^{2}\overline{R_{-1}^{(b_{-})}}D^{*}-\frac{m(2m+2-b_{-})}{2}Y^{2}\overline{R_{
-1}^{(b_{-})}}.\] The formulae
\[\big[\Delta_{K_{\mathbb{C}}}^{\overline{h}},D^{*}\big]=i\Delta_{K_{\mathbb{C}}
}^{\mathbb{R}}\quad\mathrm{and}\quad\overline{D^{*}}D^{*}=|D^{*}|^{2}-\frac{D^{*
}}{2i}\] now show that
\[L^{(b_{-})}R_{m}^{(b_{-})}=\Xi_{m}^{(b_{-})}-\frac{b_{-}(2m+2-b_{-})}{8}
\Delta_{m}^{(b_{-})}+\frac{mb_{-}(2m+2-b_{-})}{4}.\] The required commutation
relation follows. As Theorem \ref{wcop} shows that the compositions
$R_{m-2}^{(b_{-})}L^{(b_{-})}$ and $L^{(b_{-})}R_{m}^{(b_{-})}$ commute with all
the slash operators of weight $m$, and Proposition \ref{LapRmL} implies
that these operators commute with $\Delta_{m}$, the assertion about
$\Xi_{m}^{(b_{-})}$ is also established. This proves the proposition.
\end{proof}

Finally, we come to the
\begin{proof}[Proof of parts $(iii)$ and $(iv)$ of Proposition \ref{Rmpowl}]
We prove part $(iii)$ by induction (the case $l=0$ being trivial). If
$(R_{m}^{(b_{-})})^{l}$ is presented by the asserted formula then
$(R_{m}^{(b_{-})})^{l+1}$, which is $R_{m+2l}^{(b_{-})}(R_{m}^{(b_{-})})^{l}$,
equals
\[R_{m+2l}^{(b_{-})}\sum_{c=0}^{l}\sum_{s=0}^{c}A_{s,c}^{(l)}\frac{(iD^{*})^{c-s
}(\Delta_{K_{\mathbb{C}}}^{h})^{l-c}}{(-Y^{2})^{c}}=\sum_{s,c}A_{s,c}^{(l)}\frac
{R_{m+2l-c}^{(b_{-})}(iD^{*})^{c-s}(\Delta_{K_{\mathbb{C}}}^{h})^{l-c}}{(-Y^{2}
)^{c}}.\] For each $c$, the term involving
$\frac{D^{*}}{Y^{2}}$ (resp. $\frac{1}{Y^{2}}$) in $R_{m+2l-c}^{(b_{-})}$ takes
the term with indices $c$ and $s$ (for $l$) to a multiple of the term with
corresponding to $c+1$ and $s$ (resp. $c+1$ and $s+1$) for $l+1$. For
$\Delta_{K_{\mathbb{C}}}^{h}$ we have
\[\big[\Delta_{K_{\mathbb{C}}}^{h},iD^{*}\big]=\Delta_{K_{\mathbb{C}}}^{h}
\quad\mathrm{hence}\quad\Delta_{K_{\mathbb{C}}}^{h}(iD^{*})^{c-s}=\sum_{a=s}^{c}
\binom{c-s}{a-s}(iD^{*})^{c-a}\Delta_{K_{\mathbb{C}}}^{h},\] and we multiply the
latter sum by $\frac{(\Delta_{K_{\mathbb{C}}}^{h})^{l-c}}{(-Y^{2})^{c}}$. This
shows that $(R_{m}^{(b_{-})})^{l+1}$ can be expressed by the asserted formula.
Putting in the multipliers $A_{s,c}^{(l)}$ from $(R_{m}^{(b_{-})})^{l}$ and the
coefficients of $\frac{D^{*}}{Y^{2}}$ and $\frac{1}{Y^{2}}$ in
$R_{m+2l-c}^{(b_{-})}$, summing over $c$ and $s$, and taking the coefficient in
front of the term with indices $c$ and $a$ (and $l+1$) in the result, we obtain
the recursive relation asserted in part $(iii)$. We now observe that for $a=0$
the recursive formula reduces to
\[A_{0,c}^{(l+1)}=A_{0,c}^{(l)}+(2m+4l-2c+4-b_{-})A_{0,c-1}^{(l)}.\] Denote the
asserted value of $A_{0,c}^{(l)}$ by $B_{0,c}^{(l)}$. As
$A_{0,0}^{(0)}=1=B_{0,0}^{(0)}$, it suffices to show that the numbers
$B_{0,c}^{(l)}$ satisfy the latter recursive formula. But the equality
\[2(l-c+1)\bigg(m+l-c-\frac{b_{-}}{2}+1\bigg)+c(2m+4l-2c+4-b_{-}
)=2(l+1)\bigg(m+l-\frac{b_{-}}{2}+1\bigg)\] holds for every $l$ and $c$ (and
$m$ and $b_{-}$), and multiplication by $\frac{l!\cdot2^{c-1}}{c(l+1-c)!}$ and
by the binomial coefficient $\binom{m+l-\frac{b_{-}}{2}}{c-1}$ yields the
required recursive relation for the numbers $B_{0,c}^{(l)}$. This completes the
proof of the proposition.
\end{proof}

\section{Actions on Theta Kernels---Proofs \label{Proofsact}}

The main technical lemma, which will be required for the evaluations in most of
the following proofs, is based on
\begin{lem}
Given $\mu \in L_{\mathbb{R}}$, the operators $R_{0}^{(b_{-})}$ and
$L^{(b_{-})}$ take the function $P_{1,1,1}$ of $Z \in K_{\mathbb{R}}+iC$ to
$-\frac{b_{-}}{2}P_{0,2,2}$ and $-\frac{b_{-}}{2}P_{2,0,0}$. \label{RmLP111}
\end{lem}

\begin{proof}
The commutation relation between powers of $Y^{2}$ and the operators
$R_{m}^{(b_{-})}$ obtained from the first definition of the latter operators in
Theorem \ref{wcop} and the fact that the latter operators involve only
holomorphic differentiation allows us to write $R_{0}^{(b_{-})}P_{1,1,1}$ as
$P_{0,1,1}R_{-1}^{(b_{-})}(\mu,Z_{V,Z})$. Hence we must evaluate the operation
of $\Delta_{K_{\mathbb{C}}}^{h}$ and $D^{*}$ on $(\mu,Z_{V,Z})$. For the latter
operator a simple calculation yields
\[2iD^{*}(\mu,Z_{V,Z})=2i(\mu,Y_{V,Z})+2Y^{2}(\mu,z)=(\mu,Z_{V,Z})-(\mu,
\overline{Z_{V,Z}})+2Y^{2}(\mu,z).\] The former operator is pure of weight 2,
hence its action gives a non-zero result only on the part
$-\frac{Z^{2}}{2}(\mu,z)$, and using an orthonormal basis one finds that this
result is just $-b_{-}(\mu,z)$. Combining these results, we find that
\[\bigg[R_{-1}^{(m)}=\Delta_{K_{\mathbb{C}}}^{h}+\frac{ib_{-}}{Y^{2}}D^{*}-\frac
{b_{-}}{2Y^{2}}\bigg](\mu,Z_{V,Z})=-\frac{b_{-}}{2Y^{2}}(\mu,\overline{Z_{V,Z}})
,\] from which the value of $R_{0}^{(b_{-})}P_{1,1,1}$ follows. The assertion
about $L^{(b_{-})}P_{1,1,1}$ is a consequence of the value of
$R_{0}^{(b_{-})}P_{1,1,1}$, since $P_{1,1,1}$ is a real function and
$L^{(b_{-})}$ is the operator which is complex conjugate to $R_{0}^{(b_{-})}$,
multiplied by $(Y^{2})^{2}$. This proves the lemma.
\end{proof}

Another useful evaluation appears in the following
\begin{lem}
The holomorphic and anti-holomorphic $Z$-gradients of $P_{1,1,1}$ have, as
vectors in $K_{\mathbb{C}}$, the norms $P_{0,2,2}\mu_{-}^{2}$ and
$P_{2,2,0}\mu_{-}^{2}$ respectively. \label{gradnorm}
\end{lem}

\begin{proof}
$(\mu,\overline{Z_{V,Z}})$ is anti-holomorphic, and the holomorphic gradients
of $(\mu,Z_{V,Z})$ and $Y^{2}$ are $\mu_{K_{\mathbb{R}}}-(\mu,z)Z$ and $-iY$
respectively, where $\mu_{K_{\mathbb{R}}}$ is the orthogonal projection of $\mu
\in L_{\mathbb{R}}$ onto $K_{\mathbb{R}}=\{z,\zeta\}^{\perp}$. It follows that
$P_{1,1,1}$ has holomorphic gradient
\[P_{0,2,1}\big[Y^{2}(\mu_{K_{\mathbb{R}}}-(\mu,z)Z)+i(\mu,Z_{V,Z})Y\big].\]
Now, the (easily evaluated) equalities
\[\big(\mu_{K_{\mathbb{R}}}-(\mu,z)Z,Y\big)=(\mu,Y_{V,Z})-iY^{2}(\mu,z)\] and
\[(\mu,z)^{2}Z^{2}-2(\mu,z)(\mu_{K_{\mathbb{R}}},Z)+2(\mu,z)(\mu,Z_{V,Z})=2(\mu,
z)(\mu,\zeta)-\zeta^{2}(\mu,z)^{2}\] reduce to the norm of the latter gradient
\[P_{0,2,2}\big[\mu_{K_{\mathbb{R}}}^{2}+2(\mu,\zeta)(\mu,z)-\zeta^{2}(\mu,z)^{2
}-P_{1,1,1}\big].\] But $\mu$ is
$\big(\mu_{K_{\mathbb{R}}},\mu_{z},(\mu,\zeta)-\zeta^{2}\mu_{z}\big)$ in the
$K_{\mathbb{R}}\times\mathbb{R}\times\mathbb{R}$ coordinates, so that the sum
of the first three terms in the brackets is just $\mu^{2}$. Subtracting
$P_{1,1,1}=\mu_{+}^{2}$ completes the proof of the first assertion, and the
second assertion follows from complex conjugation since the function
$P_{1,1,1}$ is real-valued. This proves the lemma.
\end{proof}

\smallskip

For $\mu \in L_{\mathbb{R}}$ and $\tau=x+iy\in\mathcal{H}$ we denote the vector
$\sqrt{2\pi y}\mu$ by $\tilde{\mu}$. Its norm is $2\pi y\mu^{2}$, and after
choosing an element of $G(L_{\mathbb{R}})$, it decomposes into $\tilde{\mu}_{+}$
(of norm $2\pi y\mu_{+}^{2}$) and $\tilde{\mu}_{-}$ (whose norm is $2\pi
y\mu_{-}^{2}$). We now prove

\begin{prop}
Let $f:\mathbb{R}\to\mathbb{R}$ be a smooth function. Then the images of the
function $f(\tilde{\mu}_{+})$ under $R_{0}^{(b_{-})}$ and $L^{(b_{-})}$ are
$2\pi
yP_{0,2,2}\big[\tilde{\mu}_{-}^{2}f''(\tilde{\mu}_{+})-\frac{b_{-}}{2}f'(\tilde{
\mu}_{+})\big]$ and $2\pi
yP_{2,0,0}\big[\tilde{\mu}_{-}^{2}f''(\tilde{\mu}_{+})-\frac{b_{-}}{2}f'(\tilde{
\mu}_{+})\big]$ respectively. \label{1st}
\end{prop}

\begin{proof}
Both operators consist of a first order operator $D$ (a multiple of $D^{*}$ or
of $\overline{D^{*}}$) and a second order operator $\Delta$ (which equals
$\Delta_{K_{\mathbb{C}}}^{h}$ or $\Delta_{K_{\mathbb{C}}}^{\overline{h}}$).
Then $D\big(f(T)\big)=DT \cdot f'(T)$, and $\Delta\big(f(T)\big)$ is the sum of
$\Delta T \cdot f'(T)$ and an expression involving $f''(T)$. In our case
$T=\tilde{\mu}_{+}=2\pi y\mu_{+}^{2}=2\pi yP_{1,1,1}$, so that the coefficient
of $f'(T)$ is just $2\pi y$ times $R_{0}^{(b_{-})}P_{1,1,1}$ and
$L^{(b_{-})}P_{1,1,1}$, and the latter expressions are evaluated using Lemma
\ref{RmLP111}. The coefficients of $f''(T)$ coming from $\Delta$ being
$\Delta_{K_{\mathbb{C}}}^{h}$ or
$(Y^{2})^{2}\Delta_{K_{\mathbb{C}}}^{\overline{h}}$ are
the norms (in $K_{\mathbb{C}}$) of the holomorphic and anti-holomorphic
gradients of $T$, the latter being multiplied by $(Y^{2})^{2}$. For $T=2\pi
yP_{1,1,1}$ these norms take the values given in Lemma \ref{gradnorm},
multiplied by $(2\pi y)^{2}$. Gathering these results together and substituting
the value of $\tilde{\mu}_{-}^{2}$ completes the proof of the proposition.
\end{proof}

\smallskip

We now turn to proving assertions concerning the images of theta lifts (or
complex conjugates of theta functions), having only holomorphic weights of
automorphy, under the operators $R_{m}^{(b_{-})}$ and $L^{(b_{-})}$. This was
seen to boil down to the operation on the function $F_{r,s,t}^{(l)}$ from
Equation \eqref{Frstldef}, with $\tau$ replaced by $-\overline{\tau}$, under the
additional assumption $s=t$. The exponent was seen, using part (i) of Lemma
\ref{DeltavpmPrstl} to be
$\mathbf{e}\big(-\tau\frac{\mu^{2}}{2}\big)e^{-\tilde{\mu}_{+}^{2}}$, where the
first multiplier is a constant (i.e., independent of $Z$). The polynomial part
is evaluated in
\begin{lem}
$(i)$ For any natural numbers $k$ and $n$ we have \[(2\pi
y)^{n}P_{n-k,0,0}e^{-\Delta_{v_{+}}/8\pi y}(P_{k,n,n})e^{-2\pi
yP_{1,1,1}}=(-1)^{k}\frac{d^{k}}{dT^{k}}(T^{n}e^{-T})\bigg|_{T=\tilde{\mu}_{+}^{
2}}.\] $(ii)$ Applying $e^{\Delta_{v_{-}}/8\pi y}$ to $(2\pi
y)^{l}(\mu_{-}^{2})^{l}$ yields
$\sum_{p}\binom{l}{p}\big[\Gamma\big(l+\frac{b_{-}}{2}\big)/
\Gamma\big(p+\frac{b_{-}}{2}\big)\big]\big(\tilde{\mu}_{-}^{2}\big)^{p}$.
\label{expT}
\end{lem}

We allow the index $n-k$ appearing in Part (i) here to be negative, with the
natural extension of the definition of $P_{r,s,t}$ to negative $r$. We remark
that the expressions obtained in this part are just the generalized Laguerre
polynomials $L_{k}^{(n-k)}$, multiplied by the exponents, and normalized
appropriately.

\begin{proof}
Multiple applications of part (ii) of Lemma \ref{DeltavpmPrstl} show that
\[\frac{\Delta_{v_{+}}^{h}}{h!(-8\pi
y)^{h}}P_{k,n,n}=\frac{k!n!P_{k-h,n-h,n-h}}{(k-h)!(n-h)!h!(-2\pi y)^{h}}.\]
Multiplying by $(2\pi y)^{n}P_{n-k,0,0}$ and summing over $h$, the left hand
side of the equation in part $(i)$ becomes just
\[\sum_{h}\binom{k}{h}\frac{n!}{(n-h)!}(-1)^{h}(2\pi yP_{1,1,1})^{n-h}e^{-2\pi
yP_{1,1,1}}.\] On the other hand, differentiating the product $T^{n}e^{-T}$ $k$
times with respect to $T$ yields
\[\sum_{h=0}^{k}\binom{k}{h}\bigg(\frac{d}{dT}\bigg)^{k}T^{n}\cdot\bigg(\frac{d}
{dT}\bigg)^{k-h}e^{-T}=\sum_{h=0}^{k}\binom{k}{h}\frac{n!T^{n-h}}{(n-h)!}(-1)^{
k-h}e^{-T},\] and substituting $T=\tilde{\mu}_{+}^{2}=2\pi yP_{1,1,1}$ yields
the same expression multiplied by $(-1)^{k}$. This establishes part $(i)$. For
part $(ii)$, applying part (iii) of Lemma \ref{DeltavpmPrstl} successively
evaluates
\[\Delta_{v_{-}}^{l-p}(\mu_{-}^{2})^{l}=\frac{4^{l-p}l!}{p!}\cdot\frac{
\Gamma\big(l+\frac{b_{-}}{2}\big)}{\Gamma\big(p+\frac{b_{-}}{2}\big)}(\mu_{-}^{2
})^{p}.\] Dividing this term by $(8\pi y)^{l-p}(l-p)!$, multiplying everything
by $(2\pi y)^{l}$, and substituting $\tilde{\mu}_{-}^{2}=2\pi y\mu_{-}^{2}$
gives the asserted expression. This completes the proof of the lemma.
\end{proof}

As $\tilde{\mu}_{-}^{2}=\tilde{\mu}^{2}-\tilde{\mu}_{+}^{2}$, Lemma \ref{expT}
implies that the dependence of the expression $(2\pi
y)^{n+l}P_{n-k,0,0}F_{k,n,n}^{(l)}(-\overline{\tau},Z,\mu)$ (or the
corresponding theta function) on the variable $Z$ is only through the quantity
$\tilde{\mu}_{+}^{2}$. For convenience, we gather these results in the
following
\begin{cor}
Define the functions
\[f_{k,n,p}^{(w)}(T)=(-1)^{k}\frac{d^{k}}{dT^{k}}(T^{n}e^{-T})\cdot(w-T)^{p},\]
where $k$, $p$, and $n$ are natural numbers and $w\in\mathbb{R}$. Then the theta
function $\Theta_{L,k,n,n}^{(l)}(-\overline{\tau},Z)$ equals \[\sum_{\mu \in
L^{*}}\sum_{p}\binom{l}{p}\frac{\Gamma\big(l+\frac{b_{-}}{2}\big)}{
\Gamma\big(p+\frac{b_{-}}{2}\big)}\frac{f_{k,n,p}^{(\tilde{\mu}^{2})}(\tilde{\mu
}_{+}^{2})}{(2\pi
y)^{n+l}P_{n-k,0,0}}\mathbf{e}\bigg(-\tau\frac{\mu^{2}}{2}\bigg)e_{\mu+L}.\]
\label{eDeltaPmu}
\end{cor}

\begin{proof}
Just substitute the value of $e^{-\Delta_{v}/8\pi y}(P_{k,n,n}^{(l)})$, which
equals the product of $e^{-\Delta_{v_{+}}/8\pi y}(P_{k,n,n})$ and
$e^{-\Delta_{v_{+}}/8\pi y}\big((\mu_{-}^{2})^{l}\big)$, from Lemma \ref{expT}
into the expression defining the theta function.
\end{proof}

\smallskip

We can now present the
\begin{proof}[Proof of Proposition \ref{LSO}]
As seen above, it suffices to consider the action of $R_{m}^{(b_{-})}$ only on
the expression $P_{0,m,m}(\mu,Z)e^{-2\pi yP_{1,1,1}}$ with fixed $\mu$ (recall
that $P_{0,m,m}$ is harmonic). The holomorphicity of the differentiation in
$R_{m}^{(b_{-})}$ shows that the result is the same as
$P_{0,m,m}R_{0}^{(b_{-})}e^{-\tilde{\mu}_{+}^{2}}$. By putting $f(T)=e^{-T}$,
Proposition \ref{1st} evaluates $R_{0}^{(b_{-})}e^{-\tilde{\mu}_{+}^{2}}$ as
$2\pi yP_{0,2,2}\big(\tilde{\mu}_{-}+\frac{b_{-}}{2}\big)e^{-\tilde{\mu}_{+}}$,
and multiplying by $P_{0,m,m}$ yields
\[R_{m}^{(b_{-})}P_{0,m,m}e^{-\tilde{\mu}_{+}^{2}}=4\pi^{2}y^{2}P_{0,m+2,m+2}
\bigg[\mu_{-}^{2}+\frac{b_{-}}{4\pi y}\bigg]e^{-2\pi yP_{1,1,1}}.\] But the
expression in parentheses is $e^{\Delta_{v_{-}}/8\pi y}(\mu_{-}^{2})$ by part
$(ii)$ of Lemma \ref{expT}, and the harmonicity of $P_{0,m+2,m+2}$ allows us to
put it also into the action of $e^{-\Delta_{v}/8\pi y}$ without affecting the
resulting expression. Putting in the missing constant
$y^{\frac{b_{-}}{2}}\mathbf{e}\big(-\tau\frac{\mu^{2}}{2}\big)e_{\mu+L }$ and
summing over $\mu \in L^{*}$ we establish the equality
\[R_{m}^{(b_{-})}y^{\frac{b_{-}}{2}}\Theta_{L,0,m,m}(-\overline{\tau},Z)=4\pi^{2
}y^{
2+\frac{b_{-}}{2}}\Theta_{L,0,m+2,m+2}^{(1)}(-\overline{\tau},Z).\] But as
$P_{m+2,m+2,0}$ is harmonic, Equation \eqref{lowerTheta} shows that applying the
operator $-4\pi iy^{2}\partial_{\overline{\tau}}$ to
$y^{\frac{b_{-}}{2}}\Theta_{L,m+2,m+2,0}(\tau,Z)$ yields the complex conjugate
of the latter expression, and complex conjugation inverts the sign of $4\pi i$.
This proves the proposition.
\end{proof}

\smallskip

We now turn to the
\begin{proof}[Proof of Lemma \ref{Ltheta}]
Write the theta function $\Theta_{L,k,n,n}^{(l)}(-\overline{\tau},Z)$ as in
Corollary \ref{eDeltaPmu}. It suffices to fix $\mu \in L^{*}$ and compare the
coefficients of $\mathbf{e}\big(-\tau\frac{\mu^{2}}{2}\big)e_{\mu+L}$ in both
sides. Take some $0 \leq p \leq l$, and apply Proposition \ref{1st} with the
function $f=f_{k,n,p}^{(\tilde{\mu}^{2})}$. The powers of $2\pi y$ and
$P_{1,0,0}$ from Corollary \ref{eDeltaPmu} and Proposition \ref{1st} merge to
$(2\pi y)^{n+l-1}P_{n-2-k,0,0}$ in the denominator, and the remaining part of
$L^{(b_{-})}f_{k,n,p}^{(\tilde{\mu}^{2})}$ is
\[\binom{l}{p}\frac{\Gamma\big(l+\frac{b_{-}}{2}\big)}{\Gamma\big(p+\frac{b_{-}}
{2}\big)}\bigg[\tilde{\mu}_{-}^{2}\big(f_{k,n,p}^{(\tilde{\mu}^{2})}
\big)''(\tilde{\mu}_{+}^{2})-\frac{b_{-}}{2}\big(f_{k,n,p}^{(\tilde{\mu}^{2})}
\big)'(\tilde{\mu}_{+}^{2})\bigg].\] As
$\tilde{\mu}_{-}^{2}=\tilde{\mu}^{2}-\tilde{\mu}_{+}^{2}$, and as one easily
evaluates \[(f_{k,n,p}^{w})'(T)=-pf_{k,n,p-1}^{w}(T)-f_{k+1,n,p}^{w}(T),\] the
part in brackets in latter expression equals
\begin{equation}
f_{k+2,n,p+1}^{(\tilde{\mu}^{2})}(\tilde{\mu}_{+}^{2})+\bigg(2p+\frac{b_{-}}{2
}\bigg)f_{k+1,n,p}^{(\tilde{\mu}^{2})}(\tilde{\mu}_{+}^{2})+p\bigg(p+\frac{b_{-}
}{2}-1\bigg)f_{k,n,p-1}^{(\tilde{\mu}^{2})}(\tilde{\mu}_{+}^{2}). \label{f'dec}
\end{equation}
We now write the denominator in the preceding constant as
\[\frac{\big(p+\frac{b_{-}}{2}\big)}{\Gamma\big(p+1+\frac{b_{-}}{2}\big)},
\quad\frac{1}{\Gamma\big(p+\frac{b_{-}}{2}\big)},\quad\mathrm{and}\quad\frac{1}{
\big(p-1+\frac{b_{-}}{2}\big)\Gamma\big(p-1+\frac{b_{-}}{2}\big)}\] in front of
the three terms in Equation \eqref{f'dec} respectively, and after taking the sum
over $p$ and gathering the functions with the same index $p$ together, we see
that the quotient
$\Gamma\big(l+\frac{b_{-}}{2}\big)/\Gamma\big(p+\frac{b_{-}}{2}\big)$
multiplies
\[\bigg(p-1+\frac{b_{-}}{2}\bigg)\binom{l}{p-1}f_{k+2,n,p}^{(\tilde{\mu}^{2})}
+\bigg(2p+\frac{b_{-}}{2}\bigg)\binom{l}{p}f_{k+1,n,p}^{(\tilde{\mu}^{2})}
+(p+1)\binom{l}{p+1}f_{k,n,p}^{(\tilde{\mu}^{2})}\] (where we have omitted the
variable $\tilde{\mu}_{+}^{2}$). Using the identity
$b\binom{a}{b}=a\binom{a-1}{b-1}$ we can write the latter expression as
\[l\bigg[\binom{l-1}{p-2}f_{k+2,n,p}^{(\tilde{\mu}^{2})}(\tilde{\mu}_{+}^{2}
)+2\binom{l-1}{p-1}f_{k+1,n,p}^{(\tilde{\mu}^{2})}(\tilde{\mu}_{+}^{2})+\binom{
l-1}{p}f_{k,n,p}^{(\tilde{\mu}^{2})}(\tilde{\mu}_{+}^{2})\bigg]+\]
\begin{equation}
+\frac{b_{-}}{2}\bigg[\binom{l}{p-1}f_{k+2,n,p}^{(\tilde{\mu}^{2})}(\tilde{\mu}_
{+}^{2})+\binom{l}{p}f_{k+1,n,p}^{(\tilde{\mu}^{2})}(\tilde{\mu}_{+}^{2})\bigg].
\label{fknpwcomb}
\end{equation}
Now, differentiating $k$ times and multiplying by $(w-T)^{p}$ takes the equality
\[(T^{n}e^{-T})'=(nT^{n-1}-T^{n})e^{-T}\quad\mathrm{to}\quad f_{k,n,p}^{(w)}
(T)-f_{k+1,n,p}^{(w)}(T)=nf_{k,n-1,p}^{(w)}(T).\] One application of this
relation replaces $f_{k+1,n,p}^{(\tilde{\mu}^{2})}$ by
$f_{k+2,n,p}^{(\tilde{\mu}^{2})}+nf_{k+1,n-1,p}^{(\tilde{\mu}^{2})}$, and we
also obtain
\[f_{k,n,p}^{(\tilde{\mu}^{2})}=f_{k+2,n,p}^{(\tilde{\mu}^{2})}+2nf_{k+1,n-1,p}^
{(\tilde{\mu}^{2})}+n(n-1)f_{k,n-2,p}^{(\tilde{\mu}^{2})}.\] Each of the terms
in Equation \eqref{fknpwcomb} thus contributes to the total coefficient in front
of $f_{k+2,n,p}^{(\tilde{\mu}^{2})}$, which using the classical properties of
the binomial coefficients reduces to
$\big(l+\frac{b_{-}}{2}\big)\binom{l+1}{p}$. Using the recursive property of the
gamma function again, we obtain the coefficient
$\binom{l+1}{p}\Gamma\big(l+1+\frac{b_{-}}{2}\big)/\Gamma\big(p+\frac{b_{-}}{2}
\big)$, which together with \[\frac{1}{P_{n-2-k,0,0}(2\pi
y)^{n+l-1}}=\frac{4\pi^{2}y^{2}}{P_{n-2-k,0,0}(2\pi y)^{n+l+1}}\] yields the
coefficient appearing in front of
$f_{k+2,n,p}^{(\tilde{\mu}^{2})}(\tilde{\mu}_{+}^{2})$ in the expansion of
$4\pi^{2}y^{2}\Theta_{L,k+2,n,n}^{(l+1)}(-\overline{\tau},Z)$ in Corollary
\ref{eDeltaPmu}. The total coefficient in front of the function
$f_{k+1,n-1,p}^{(\tilde{\mu}^{2})}$ in Equation \eqref{fknpwcomb} becomes
(again, using binomial identities) just
$\big(2l+\frac{b_{-}}{2}\big)\binom{l}{p}$, and the gamma quotient and the
powers of $2pi y$ and $P_{1,0,0}$ complete the formula for the second asserted
term. For the remaining term
$n(n-1)l\binom{l-1}{p}f_{k,n-2,p}^{(\tilde{\mu}^{2})}$ from Equation
\eqref{fknpwcomb} we use the functional equation of the gamma function again to
write $\Gamma\big(l+\frac{b_{-}}{2}\big)$ as
$\big(l-1+\frac{b_{-}}{2}\big)\Gamma\big(l-1+\frac{b_{-}}{2}\big)$, and we also
decompose \[P_{n-2-k,0,0}(2\pi y)^{n+l-1}=4\pi^{2}y^{2}P_{n-2-k,0,0}(2\pi
y)^{n-2+l-1}.\] Corollary \ref{eDeltaPmu} then establishes the remaining
asserted term in a similar manner. This completes the proof of the lemma.
\end{proof}

We go on to the
\begin{proof}[Proof of Proposition \ref{LsTheta}]
We prove the assertion by induction on $s$. The case $s=0$ is trivial. Denote
the asserted coefficient corresponding to the $h$th term in the expression for
the image under $(L^{(b_{-})})^{s}$ by $a_{s,h}(y)$. We need to evaluate
\[\sum_{h}a_{s,h}(y)L^{(b_{-})}\Theta_{L,s+h,m+s-h,m+s-h}^{(h)},\] and compare
it with the asserted expression for $s+1$. Lemma \ref{Ltheta} shows that for
each $h$ the $L^{(b_{-})}$-image of the corresponding theta function is a linear
 combination of three theta functions, which correspond to the index $s+1$ and
the indices $h-1$, $h$, and $h+1$. After applying the appropriate summation
index changes, the coefficient which we get in front of
$\Theta_{L,s+1+h,m-s-1+h,m-s-1+h}^{(h)}$ in
$(L^{(b_{-})})^{s+1}\overline{\Theta_{L,m,m,0}}$ is
\[4\pi^{2}y^{2}a_{s,h-1}(y)+(m-s+h)\bigg(2h+\frac{b_{-}}{2}\bigg)a_{s,h}(y)+\]
\[+\frac{(m-s+h)(m-s+h+1)(h+1)\big(h+\frac{b_{-}}{2}\big)}{4\pi^{2}y^{2}}a_{s,
h+1}(y).\] Substituting the values of $a_{s,t}$ for $t$ being $h-1$, $h$, and
$h+1$, one easily sees that all three terms yield the same multiplier
$\frac{m!(4\pi^{2}y^{2})^{h}}{(m-s-1+h)!}$. Applying the functional equation
for the gamma function in the first and third term, we obtain that the
remaining expression equals
\[\frac{\Gamma\big(s+\frac{b_{-}}{2}\big)}{\Gamma\big(h+\frac{b_{-}}{2}\big)}
\bigg[\bigg(h-1+\frac{b_{-}}{2}\bigg)\binom{s}{h-1}+\bigg(2h+\frac{b_{-}}{2}
\bigg)\binom{s}{h}+(h+1)\binom{s}{h+1}\bigg].\] The same considerations we
applied for evaluating the coefficient of $f_{k+2,n,p}^{(\tilde{\mu}^{2})}$ in
Lemma \ref{Ltheta} show that the expression in brackets equals
$\big(s+\frac{b_{-}}{2}\big)\binom{s+1}{h}$. Applying the functional equation
of the gamma function once more, this yields the asserted value of $a_{s+1,h}$.
This completes the proof of the proposition.
\end{proof}

Finally, we come to the
\begin{proof}[Proof of Proposition \ref{Lb-Rtaum}]
We begin by proving that for any $q\in\mathbb{N}$, the action of the operator
$(-4\pi i)^{q}\delta_{1-\frac{b_{-}}{2}+r+t-2l,\tau}^{q}$ sends
$y^{\frac{b_{-}}{2}+2l}\Theta_{L,r,s,t}^{(l)}(\tau,Z)$ to
\[\sum_{h=0}^{q}\binom{q}{h}(4\pi^{2})^{h}y^{\frac{b_{-}}{2}+2l-2q+2h}\frac{
l!\Gamma\big(l+\frac{b_{-}}{2}\big)}{(l-q+h)!\Gamma\big(l-q+h+\frac{b_{-}}{2}
\big)}\Theta_{L,r+h,s+h,t+h}^{(l-q+h)}(\tau,Z).\] For $q=0$ the assertion is
trivially true. We write the asserted function of $y$ preceding the theta
function in the term corresponding to $h$ in the sum arising from the index $q$
as $\binom{q}{h}(4\pi^{2})^{h}b_{l-q+h}(y)$. Given that this assertion holds for
$q$, we apply Equation \eqref{deltakTheta} for the operator $-4\pi
i\delta_{1-\frac{b_{-}}{2}+r+t-2l+2q}$ acting on each term, and observe that the
resulting theta functions correspond to the index $q+1$ and to the summation
indices $h+1$ and $h$. Moreover, after the usual index change manipulations one
sees that the total coefficient in front of the theta function with indices
$q+1$ and $h$ is
\[(4\pi^{2})^{h}\bigg[\binom{q}{h-1}b_{l-q+h-1}(y)+\binom{q}{h}
(l-q+h)\bigg(l-q+h+\frac{b_{-}}{2}-1\bigg)\frac{b_{l-q+h-1}(y)}{y^{2}}\bigg].\]
As the second term here is easily seen to be just
$\binom{q}{h-1}b_{l-q+h-1}(y)$, the inductive assertion follows from the
classical property of the binomial coefficients. With $r=s=0$ and $t=l=q=m$ the
general formula from above becomes
\[\sum_{h}\binom{m}{h}\frac{\Gamma\big(m+\frac{b_{-}}{2}\big)}{
\Gamma\big(h+\frac{b_{-}}{2}\big)}\frac{m!(4\pi^{2}y^{2})^{h}}{h!}y^{\frac{b_{-}
}{2}}\Theta_{L,h,h,m+h}^{(h)}(\tau,Z).\] On the other hand, Putting $m=s$ in
Proposition \ref{LsTheta}, multiplying by $y^{\frac{b_{-}}{2}}$ (which commutes
with differential operators in the variable $Z$), and taking the complex
conjugate of the result, yields precisely the same expression. This proves the
proposition.
\end{proof}

\noindent\textsc{Einstein Institute of Mathematics, the Hebrew University of Jerusalem, Edmund Safra Campus, Jerusalem 91904, Israel}

\noindent E-mail address: zemels@math.huji.ac.il

\end{document}